%%%%%%%%%%%%%%%%%%%%%%%%%%%%%%%%%%%%%%%%%%%%%%%%%%%%%%%%%%%%%%%%%%%%%%%%%%%
%% Preamble - Document class and packages
%%%%%%%%%%%%%%%%%%%%%%%%%%%%%%%%%%%%%%%%%%%%%%%%%%%%%%%%%%%%%%%%%%%%%%%%%%%

\documentclass[letterpaper,11pt,reqno]{amsart}

\usepackage[utf8]{inputenc}
\usepackage[T1]{fontenc}
\usepackage{mathcommand}
\usepackage{xcolor}
\usepackage{xstring}
\usepackage[pagebackref=true]{hyperref}
\usepackage{fancyhdr}
\usepackage{geometry}
\usepackage{amsmath}
\usepackage{amsthm}
\usepackage{amssymb}
\usepackage{mathtools}
\usepackage{stmaryrd}
\usepackage{mathdots}
\usepackage{faktor}
\usepackage{framed}
\usepackage{hyphenat}
\usepackage[capitalize]{cleveref}
\usepackage{array}
\usepackage{enumitem}
\usepackage[all,cmtip]{xy}
\usepackage{tikz}
\usetikzlibrary{cd}
\usepackage{microtype}

%%%%%%%%%%%%%%%%%%%%%%%%%%%%%%%%%%%%%%%%%%%%%%%%%%%%%%%%%%%%%%%%%%%%%%%%%%
%% Margins
%%%%%%%%%%%%%%%%%%%%%%%%%%%%%%%%%%%%%%%%%%%%%%%%%%%%%%%%%%%%%%%%%%%%%%%%%%

\geometry{
    % margin=1.3in,
    hscale=0.7,
    vscale=0.75,
    headheight=13pt,
    marginpar=2cm,
    centering,
    % showframe
}

%%%%%%%%%%%%%%%%%%%%%%%%%%%%%%%%%%%%%%%%%%%%%%%%%%%%%%%%%%%%%%%%%%%%%%%%%%
%% Link colors
%%%%%%%%%%%%%%%%%%%%%%%%%%%%%%%%%%%%%%%%%%%%%%%%%%%%%%%%%%%%%%%%%%%%%%%%%%

\definecolor{verydarkblue}{rgb}{0,0,0.5}

\hypersetup{
    breaklinks,
    colorlinks,
    citecolor=verydarkblue,
    linkcolor=verydarkblue,
    urlcolor=verydarkblue,
}

%%%%%%%%%%%%%%%%%%%%%%%%%%%%%%%%%%%%%%%%%%%%%%%%%%%%%%%%%%%%%%%%%%%%%%%%%%%
%% Preamble - Theorem environments
%%%%%%%%%%%%%%%%%%%%%%%%%%%%%%%%%%%%%%%%%%%%%%%%%%%%%%%%%%%%%%%%%%%%%%%%%%%

\theoremstyle{plain}

\newtheorem{introthm}{Theorem}

\newtheorem{thm}{Theorem}[section]
\newtheorem{prop}[thm]{Proposition}
\newtheorem{lemma}[thm]{Lemma}
\newtheorem{cor}[thm]{Corollary}

\theoremstyle{definition}

\newtheorem{definition}[thm]{Definition}

\newtheorem{remark}[thm]{Remark}
\newtheorem{setup}[thm]{Setup}

%%%%%%%%%%%%%%%%%%%%%%%%%%%%%%%%%%%%%%%%%%%%%%%%%%%%%%%%%%%%%%%%%%%%%%%%%%%
%% Preamble - Article Style
%%%%%%%%%%%%%%%%%%%%%%%%%%%%%%%%%%%%%%%%%%%%%%%%%%%%%%%%%%%%%%%%%%%%%%%%%%%

\IfFileExists{./article-style.tex}{%%%%%%%%%%%%%%%%%%%%%%%%%%%%%%%%%%%%%%%%%%%%%%%%%%%%%%%%%%%%%%%%%%%%%%%%%%%
%%                                                                       %%
%%   Style mod                                                           %%
%%                                                                       %%
%%%%%%%%%%%%%%%%%%%%%%%%%%%%%%%%%%%%%%%%%%%%%%%%%%%%%%%%%%%%%%%%%%%%%%%%%%%

\pagestyle{fancy}
\fancyhead{}
\fancyfoot{}
\fancyhead[LE,RO]{\small \thepage}
\fancyhead[RE]{\small \nouppercase{\rightmark}}
\fancyhead[LO]{\small \nouppercase{\leftmark}}

\setcounter{tocdepth}{1}

\makeatletter

% REFERENCES

\global\BR@BackrefAlttrue

\renewcommand*{\backrefalt}[4]{%
    %\tiny%
    %(%
    \ifcase #1 {\color{red}Not cited}%
          \or Cited on page~#2%
          \else Cited on pages~#2%
    \fi%
    .%)%
}

% \def\print@backrefs#1{%
%     \space\SentenceSpace%
%     \begingroup%
%         \expandafter\providecommand\csname brc@#1\endcsname{0}%
%         \expandafter\providecommand\csname brcd@#1\endcsname{0}%
%         \expandafter\backrefalt%
%             \csname brc@#1\expandafter\endcsname%
%             \csname brl@#1\expandafter\endcsname%
%             \csname brcd@#1\expandafter\endcsname%
%             \csname brld@#1\endcsname%
%     \endgroup%
% }

% ORIGINAL
%
% \def\maketitle{\par
%   \@topnum\z@ % this prevents figures from falling at the top of page 1
%   \@setcopyright
%   \thispagestyle{firstpage}% this sets first page specifications
%   \uppercasenonmath\shorttitle
%   \ifx\@empty\shortauthors \let\shortauthors\shorttitle
%   \else \andify\shortauthors
%   \fi
%   \@maketitle@hook
%   \begingroup
%   \@maketitle
%   \toks@\@xp{\shortauthors}\@temptokena\@xp{\shorttitle}%
%   \toks4{\def\\{ \ignorespaces}}% defend against questionable usage
%   \edef\@tempa{%
%     \@nx\markboth{\the\toks4
%       \@nx\MakeUppercase{\the\toks@}}{\the\@temptokena}}%
%   \@tempa
%   \endgroup
%   \c@footnote\z@
%   \@cleartopmattertags
% }

%\hypersetup{pageanchor=false}
\def\maketitle{\par
  \@topnum\z@ % this prevents figures from falling at the top of page 1
  \@setcopyright
  \thispagestyle{empty}% this sets first page specifications
  \ifx\@empty\shortauthors \let\shortauthors\shorttitle
  \else \andify\shortauthors
  \fi
  \@maketitle@hook
  \begingroup
  \@maketitle
  \toks@\@xp{\shortauthors}\@temptokena\@xp{\shorttitle}%
  \toks4{\def\\{ \ignorespaces}}% defend against questionable usage
  \edef\@tempa{%
    \@nx\markboth{\the\toks4
      \@nx\MakeUppercase{\the\toks@}}{\the\@temptokena}}%
  \@tempa
  \endgroup
  \c@footnote\z@
  % \vfill
  % \tableofcontents
  % \vfill
  % \vfill
    \renewcommand{\footnoterule}{%
      \kern -3pt
      \hrule width \textwidth height .5pt
      \kern 2pt
    }
  {
    \renewcommand\thefootnote{}
    \vspace{-2em}
    \footnote{
      % \renewcommand{\footnoterule}{%
      %   \kern -3pt
      %   \hrule width \textwidth height 1pt
      %   \kern 2pt
      % }
      % \hrule
      %\vspace*{.7em}
      \par\vspace{-1.2em}\noindent%
      \setlength{\parindent}{0pt}%
      \def\@footnotetext##1{\noindent{\footnotesize##1}\par}%
      \let\@makefnmark\relax  \let\@thefnmark\relax
      \ifx\@empty\@date\else \@footnotetext{\@setdate}\fi
      \ifx\@empty\@subjclass\else \@footnotetext{\@setsubjclass}\fi
      \ifx\@empty\@keywords\else \@footnotetext{\@setkeywords}\fi
      \ifx\@empty\thankses\else \@footnotetext{%
        \@setthanks}%
      \fi
    }
    \addtocounter{footnote}{-1}
  }
  \@cleartopmattertags
  %\newpage
  %\hypersetup{pageanchor=true}
}

% ORIGINAL
%
% \def\@adminfootnotes{%
%   \let\@makefnmark\relax  \let\@thefnmark\relax
%   \ifx\@empty\@date\else \@footnotetext{\@setdate}\fi
%   \ifx\@empty\@subjclass\else \@footnotetext{\@setsubjclass}\fi
%   \ifx\@empty\@keywords\else \@footnotetext{\@setkeywords}\fi
%   \ifx\@empty\thankses\else \@footnotetext{%
%     \def\par{\let\par\@par}\@setthanks}%
%   \fi
% }

\def\@adminfootnotes{\@empty}

% ORIGINAL
%
% \def\@settitle{\begin{center}%
%   \baselineskip14\p@\relax
%     \bfseries
% \uppercasenonmath\@title
%   \@title
%   \end{center}%
% }

\def\@settitle{\begin{center}%
  \baselineskip14\p@\relax
    \bfseries
\Large
  \@title
  \end{center}%
}

% ORIGINAL
%
% \def\@setauthors{%
%   \begingroup
%   \def\thanks{\protect\thanks@warning}%
%   \trivlist
%   \centering\footnotesize \@topsep30\p@\relax
%   \advance\@topsep by -\baselineskip
%   \item\relax
%   \author@andify\authors
%   \def\\{\protect\linebreak}%
%   \MakeUppercase{\authors}%
%   \ifx\@empty\contribs
%   \else
%     ,\penalty-3 \space \@setcontribs
%     \@closetoccontribs
%   \fi
%   \endtrivlist
%   \endgroup
% }

\def\@setauthors{%
  \begingroup
  \def\thanks{\protect\thanks@warning}%
  \trivlist
  \centering\footnotesize \@topsep30\p@\relax
  \advance\@topsep by -\baselineskip
  \item\relax
  \author@andify\authors
  \def\\{\protect\linebreak}%
  \large{\authors}%
  \ifx\@empty\contribs
  \else
    ,\penalty-3 \space \@setcontribs
    \@closetoccontribs
  \fi
  \endtrivlist
  \endgroup
}

% ORIGINAL
%
% \def\@setaddresses{\par
%   \nobreak \begingroup
% \footnotesize
%   \def\author##1{\nobreak\addvspace\bigskipamount}%
%   \def\\{\unskip, \ignorespaces}%
%   \interlinepenalty\@M
%   \def\address##1##2{\begingroup
%     \par\addvspace\bigskipamount\indent
%     \@ifnotempty{##1}{(\ignorespaces##1\unskip) }%
%     {\scshape\ignorespaces##2}\par\endgroup}%
%   \def\curraddr##1##2{\begingroup
%     \@ifnotempty{##2}{\nobreak\indent\curraddrname
%       \@ifnotempty{##1}{, \ignorespaces##1\unskip}\/:\space
%       ##2\par}\endgroup}%
%   \def\email##1##2{\begingroup
%     \@ifnotempty{##2}{\nobreak\indent\emailaddrname
%       \@ifnotempty{##1}{, \ignorespaces##1\unskip}\/:\space
%       \ttfamily##2\par}\endgroup}%
%   \def\urladdr##1##2{\begingroup
%     \def~{\char`\~}%
%     \@ifnotempty{##2}{\nobreak\indent\urladdrname
%       \@ifnotempty{##1}{, \ignorespaces##1\unskip}\/:\space
%       \ttfamily##2\par}\endgroup}%
%   \addresses
%   \endgroup
% }

\def\@setaddresses{\par
  \nobreak \begingroup
\footnotesize
  \def\author##1{\end{minipage}\hskip 2em \begin{minipage}[t]{.5\textwidth minus
  1em}\raggedright%
    ~\\[2em]{\bf##1}\\[.5em]%
  }%
  \interlinepenalty\@M
  \def\address##1##2{\begingroup
    %\par\addvspace\bigskipamount\indent
    %\@ifnotempty{##1}{(\ignorespaces##1\unskip) }%
    %{\scshape\ignorespaces##2}\par\endgroup}%
    {\ignorespaces##2}\endgroup\\[.5em]}%
  \def\curraddr##1##2{\begingroup
    \@ifnotempty{##2}{\nobreak\indent\curraddrname
      \@ifnotempty{##1}{, \ignorespaces##1\unskip}\/:\space
      ##2\par}\endgroup}%
  \def\email##1##2{\begingroup
    \@ifnotempty{##2}{\nobreak\indent%\emailaddrname
      \@ifnotempty{##1}{, \ignorespaces##1\unskip}%\/:\space
      \ttfamily##2\par}\endgroup}%
  \def\urladdr##1##2{\begingroup
    \def~{\char`\~}%
    \@ifnotempty{##2}{\nobreak\indent\urladdrname
      \@ifnotempty{##1}{, \ignorespaces##1\unskip}\/:\space
      \ttfamily##2\par}\endgroup}%
  \setlength{\parindent}{0pt}%
  \vfill%
  {
  \hskip -2em%
  \begin{minipage}{0mm}
  \addresses
  \end{minipage}
  }
  \endgroup
}

% ORIGINAL
%
% \renewcommand{\author}[2][]{%
%   \ifx\@empty\authors
%     \gdef\authors{#2}%
%   \else
%     \g@addto@macro\authors{\and#2}%
%     \g@addto@macro\addresses{\author{}}%
%   \fi
%   \@ifnotempty{#1}{%
%     \ifx\@empty\shortauthors
%       \gdef\shortauthors{#1}%
%     \else
%       \g@addto@macro\shortauthors{\and#1}%
%     \fi
%   }%
% }
% \edef\author{\@nx\@dblarg
%   \@xp\@nx\csname\string\author\endcsname}

\renewcommand{\author}[2][]{%
  \ifx\@empty\authors
    \gdef\authors{#2}%
    \g@addto@macro\addresses{\author{#2}}%
  \else
    \g@addto@macro\authors{\and#2}%
    \g@addto@macro\addresses{\author{#2}}%
  \fi
  \@ifnotempty{#1}{%
    \ifx\@empty\shortauthors
      \gdef\shortauthors{#1}%
    \else
      \g@addto@macro\shortauthors{\and#1}%
    \fi
  }%
}
\edef\author{\@nx\@dblarg
  \@xp\@nx\csname\string\author\endcsname}

% ORIGINAL
%
% \def\@secnumfont{\mdseries}

\def\@secnumfont{\@empty}

% ORIGINAL
%
% \def\section{\@startsection{section}{1}%
%   \z@{.7\linespacing\@plus\linespacing}{.5\linespacing}%
%   {\normalfont\scshape\centering}}

\def\section{\@startsection{section}{1}%
  \z@{.7\linespacing\@plus\linespacing}{.5\linespacing}%
  {\large\bfseries\centering}}

\makeatother
}{}

%%%%%%%%%%%%%%%%%%%%%%%%%%%%%%%%%%%%%%%%%%%%%%%%%%%%%%%%%%%%%%%%%%%%%%%%%%%
%% Preamble - Macros
%%%%%%%%%%%%%%%%%%%%%%%%%%%%%%%%%%%%%%%%%%%%%%%%%%%%%%%%%%%%%%%%%%%%%%%%%%%

\newcommand{\NashO}{\mathrm{Nash}_1}
\newcommand{\NashT}{\mathrm{Nash}_2}
\newcommand{\Nashn}{\mathrm{Nash}_n}
\newcommand{\Hilb}{\mathrm{Hilb}_{\binom{n+d}{d}}}
\newcommand{\Hilbl}{\mathrm{Hilb}_l}
\newcommand{\tors}{\mathrm{tors}}
\newcommand{\Nash}{\mathrm{Nash}}
\newcommand{\Grass}{\mathrm{Grass}}

\newcommand{\mysetminusD}{\hbox{\tikz{\draw[line width=0.6pt,line cap=round] (3pt,0) -- (0,6pt);}}}
\newcommand{\mysetminusT}{\mysetminusD}
\newcommand{\mysetminusS}{\hbox{\tikz{\draw[line width=0.45pt,line cap=round] (2pt,0) -- (0,4pt);}}}
\newcommand{\mysetminusSS}{\hbox{\tikz{\draw[line width=0.4pt,line cap=round] (1.5pt,0) -- (0,3pt);}}}

\newcommand{\mysetminus}{\mathbin{\mathchoice{\mysetminusD}{\mysetminusT}{\mysetminusS}{\mysetminusSS}}}

%%%%%%%%%%%%%%%%%%%%%%%%%%%%%%%%%%%%%%%%%%%%%%%%%%%%%%%%%%%%%%%%%%%%%%%%%%%
%% Start of document
%%%%%%%%%%%%%%%%%%%%%%%%%%%%%%%%%%%%%%%%%%%%%%%%%%%%%%%%%%%%%%%%%%%%%%%%%%%
	
\begin{document}

\title{Higher Nash Blow-ups and Nobile's Theorem}

\author{Shravan Saoji}

\address[S.\ Saoji]{%
    Department of Mathematics\\
    University of Oklahoma\\
    601 Elm Avenue, Room 423\\
    Norman, OK 73019 (USA)%
}

\email{shravan.saoji@ou.edu}

\subjclass{Primary 14E15; Secondary 14B05}
\keywords{Nash blowups, higher Nash blowups, principal parts, resolution of singularities}

\begin{abstract}
We study the higher Nash blow-ups introduced by T.\ Yasuda and investigate the
higher version of the classical Nobile's theorem. In particular, we give a
characteristic free proof of the higher Nobile's theorem for the graded case. We
also give a proof for the 2nd order Nash blow-ups in characteristic zero.
\end{abstract}

\maketitle

%%%%%%%%%%%%%%%%%%%%%%%%%%%%%%%%%%%%%%%%%%%%%%%%%%%%%%%%%%%%%%%%%%%%%%%%%%%
%% Introduction
%%%%%%%%%%%%%%%%%%%%%%%%%%%%%%%%%%%%%%%%%%%%%%%%%%%%%%%%%%%%%%%%%%%%%%%%%%%

\section{Introduction}

The higher Nash blow-up was first introduced by T.~Yasuda \cite{MR2371378} which can be thought of as a higher version of the Nash blow-up of a variety. The usual Nash blow-up of a variety replaces each smooth point by the limit of nearby tangent spaces in a Grassmannian. The higher Nash blow-up of a variety instead replaces each smooth point by the limit of nearby infinitesimal neighborhoods in a Hilbert scheme of points. The Nash blow-up and later higher Nash blow-up were introduced as a method for resolution of singularities. An important step towards this problem is to prove that Nash and higher Nash blow-ups modify singular varieties. In the case of the Nash blow-up, there is a classical theorem by A. Nobile \cite{MR0409462} for varieties over $\mathbb{C}$. The theorem says that the Nash blow-up of a variety is isomorphic to the variety if and only if the variety is non-singular. The same theorem was proved by D.~Duarte and L.~Núñez-Betancourt \cite{MR4382471} for normal varieties over a field of positive characteristic. We investigate the higher version of Nobiles's theorem for the graded case. The higher Nobile's theorem has been investigated for curves in \cite{MR2371378}, and for normal hypersurfaces in \cite{MR3614150}, and \cite{MR4382471}. %We say an algebraic variety $X$ is graded if the defining ideal of $X$ is homogeneous. 
We prove the following theorems.

\begin{introthm}[see~\cref{Higher_Nash_graded}] Let $X$ be a graded algebraic variety over an algebraically closed field $k$. Suppose $\Nashn(X) \cong X$ for some $n \in \mathbb{N}$, then 
    \begin{enumerate}[label=(\arabic*)]
        \item if $\operatorname{char}(k) = 0$, then $X$ is a non-singular variety.
        \item if $X$ is normal and $\operatorname{char}(k) > 0$, then $X$ is a non-singular variety.
    \end{enumerate}
\end{introthm}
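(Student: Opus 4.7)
The plan is to reduce the problem to checking non-singularity at the vertex (the irrelevant homogeneous maximal ideal) by means of the $\mathbb{G}_m$-action, and then to detect smoothness at the vertex through a Hilbert--Samuel rigidity statement.

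Write $X = \operatorname{Spec}(R)$ with $R$ a positively graded $k$-algebra with $R_0 = k$, and let $\mathfrak{m} = R_{>0}$ correspond to the vertex $v \in X$. The first observation is that it suffices to prove smoothness at $v$: the smooth locus $X_{\mathrm{sm}} \subset X$ is open and $\mathbb{G}_m$-invariant, and for every closed point $x \in X$ the orbit map $\mathbb{G}_m \to X$, $t \mapsto t \cdot x$, extends to $\mathbb{A}^1 \to X$ sending $0 \mapsto v$. If $v \in X_{\mathrm{sm}}$, then for each $x$ the extended map meets $X_{\mathrm{sm}}$, and $\mathbb{G}_m$-invariance forces $x \in X_{\mathrm{sm}}$; hence $X_{\mathrm{sm}} = X$.

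Next I would translate the hypothesis $\Nashn(X) \cong X$ into the local freeness of (the torsion-free quotient of) the sheaf of $n$-th principal parts $\mathcal{P}^n_X$. The higher Nash blow-up is, by construction, the universal modification of $X$ on which this quotient becomes locally free of rank $\binom{n+d}{d}$ with $d = \dim X$; if $\Nashn(X) \cong X$, this local freeness already holds on $X$ itself. Taking the fiber at $\mathfrak{m}$, after controlling torsion (which is automatic in characteristic zero on a reduced scheme, and handled via the normality hypothesis in positive characteristic, in the spirit of Duarte and N\'u\~nez-Betancourt's treatment of the classical case), this gives $\dim_k R_\mathfrak{m}/\mathfrak{m}^{n+1} R_\mathfrak{m} = \binom{n+d}{d}$. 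I would then invoke the classical fact that for any Noetherian local ring of dimension $d$ one has $\dim_k R_\mathfrak{m}/\mathfrak{m}^{n+1} R_\mathfrak{m} \geq \binom{n+d}{d}$, with equality for some $n \geq 1$ forcing $R_\mathfrak{m}$ to be regular of dimension $d$. This yields smoothness at $v$, and hence on all of $X$ by the reduction above.

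The main obstacle I foresee is the translation step: relating $\Nashn(X) \cong X$ to the correct local freeness statement and controlling the torsion of $\mathcal{P}^n_X$ at $\mathfrak{m}$. Characteristic zero provides enough regularity (torsion-freeness on reduced schemes), while the normality hypothesis in positive characteristic is needed to rule out the inseparability pathologies that also force the analogous assumption in Duarte and N\'u\~nez-Betancourt's version of the classical Nobile theorem. Once the correct local freeness at $v$ is established with rank $\binom{n+d}{d}$, the concluding Hilbert--Samuel rigidity is characteristic-free and handles both cases uniformly.
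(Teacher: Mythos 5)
Your opening reduction (smoothness propagates from the vertex along $\mathbb{G}_m$-orbits) is sound, and so is your concluding rigidity statement: a graded Noether normalization gives a degree-preserving injection $k[\theta_1,\dots,\theta_d]\hookrightarrow \operatorname{gr}_{\mathfrak m}(R)$, so $\dim_k R/\mathfrak m^{n+1}\ge\binom{n+d}{d}$ for every $d$-dimensional local ring, with equality for some $n\ge 1$ forcing embedding dimension $d$. The gap is the middle step, and it is not a technicality — it is where the entire content of the theorem sits. The hypothesis $\Nashn(X)\cong X$ gives local freeness of the \emph{torsion-free quotient} $\widetilde{\mathcal{P}}{}^n_X$ of rank $\binom{n+d}{d}$, not of $\mathcal{P}^n_X$ itself. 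The fiber of $\mathcal{P}^n_X$ at a closed point $x$ is $\mathcal{O}_{X,x}/\mathfrak m_x^{n+1}$, and the fiber of $\widetilde{\mathcal{P}}{}^n_X$ is a quotient of it; so what your hypothesis actually yields is $\dim_k \mathcal{O}_{X,x}/\mathfrak m_x^{n+1}\ge\binom{n+d}{d}$ — the inequality that is automatic and carries no information. To invoke rigidity you need the reverse inequality, i.e.\ that $\tors(P^n_R)$ dies in the fiber, $\tors(P^n_R)\subseteq \mathfrak m_R P^n_R$. Your justification — that this is ``automatic in characteristic zero on a reduced scheme'' — is false: $\mathcal{P}^n_X$, like $\Omega_X$, typically has torsion supported precisely on the singular locus (already $\Omega_R$ for the cuspidal cubic $R=k[t^2,t^3]$ has nonzero torsion in characteristic zero; and for $R=k[t^p,t^{p+1}]$ in characteristic $p$ one even has $\widetilde{\Omega}_R$ free of rank one although $R$ is singular, showing that no easy torsion control is available). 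Nothing in your argument bounds the image of the torsion in the fiber at a singular point.

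This missing bound is exactly what the paper's $\mathrm{HN}_n$ property supplies: via the graded arc lemma one builds an arc $\beta\colon R\to L[[t]]$ with $\beta(x_i)=u_it$ and shows that $P^n_{A,+}\otimes_A R/\mathfrak m_R\to \widetilde{P}^n_{R,+}\otimes_R R/\mathfrak m_R$ is injective for a Noether normalization $A$; together with the rank count this forces the fiber of $\widetilde{P}^n_{R,+}$ to be spanned by the monomials in the $d^n(x_i)$, then Nakayama and the surjection $\widetilde{P}^n_{R,+}\twoheadrightarrow\widetilde{\Omega}_R$ give $\widetilde{\Omega}_R$ free, so $\NashO(X)\cong X$, and the classical Nobile and Duarte--N\'u\~nez-Betancourt theorems finish. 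A further warning sign for your route: you assert the final step is characteristic-free and uniform in both cases, so a complete version of your argument would prove the positive-characteristic statement with no normality hypothesis; but normality is used in the paper precisely to invoke the positive-characteristic version of the classical theorem, and the known characteristic-$p$ pathologies (such as the example above) live exactly in the torsion you are discarding.
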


We also give the higher version of Nobile's theorem for the 2nd order Nash
blow-ups.

    %\noindent \textbf{Theorem B} (see). Let $X$ be a hypersurface. Suppose $\text{Nash}_{n}(X) \cong X$ for some $n \in \mathbb{N}$, then 
    %\begin{enumerate}[label=(\arabic*)]
       % \item if $\text{char}(k) = 0$, then $X$ is a non-singular variety.
       % \item if $X$ is normal and $\text{char}(k) > 0$, then $X$ is a non-singular variety.
   % \end{enumerate}

\begin{introthm}[see~\cref{Higher_Nash_n=2}] Let $X$ be an algebraic variety over an algebraically closed field $k$ with $\operatorname{char}(k) = 0$. Suppose $\NashT(X) \cong X$, then $X$ is a non-singular variety.
\end{introthm}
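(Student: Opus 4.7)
The plan is to reduce to the classical Nobile theorem for the ordinary Nash blow-up \(\mathrm{Nash}_1\): specifically, to show that \(\NashT(X)\cong X\) forces \(\NashO(X)\cong X\), after which Nobile's theorem in characteristic zero yields that \(X\) is smooth. First, I would translate the hypothesis via the sheaf of second-order principal parts: using Yasuda's identification of higher Nash blow-ups with Nash modifications of principal parts, the condition \(\NashT(X)\cong X\) over \(X\) is equivalent to \(\mathcal{P}^2_X/\mathrm{tors}\) being locally free of rank \(\binom{n+2}{2}\) as an \(\mathcal{O}_X\)-module, where \(n=\dim X\).

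Next, I would exploit the canonical short exact sequence of principal parts
\[
0 \to \mathcal{I}_\Delta^2/\mathcal{I}_\Delta^3 \to \mathcal{P}^2_X \to \mathcal{P}^1_X \to 0,
\]
where \(\mathcal{I}_\Delta\subset\mathcal{O}_{X\times X}\) is the ideal of the diagonal, together with the canonical surjection \(\mathrm{Sym}^2\Omega^1_X\twoheadrightarrow\mathcal{I}_\Delta^2/\mathcal{I}_\Delta^3\), which is an isomorphism on the smooth locus. The goal is to deduce from the first step that \(\mathcal{P}^1_X/\mathrm{tors}\) is locally free of rank \(n+1\), equivalently that \(\Omega^1_X/\mathrm{tors}\) is locally free of rank \(n\), which gives \(\NashO(X)\cong X\).

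The characteristic-zero hypothesis would enter through a symmetrization argument: in characteristic zero, multiplication by \(2\) is invertible on \(\mathrm{Sym}^2\Omega^1_X\), and one has a canonical section of \(\mathrm{Sym}^2\Omega^1_X\hookrightarrow\mathcal{P}^2_X\) on \(X_{\mathrm{sm}}\) coming from the Taylor-expansion splitting that is available only in characteristic \(\neq 2\). Using this section together with the local freeness from the first step, I would show that the image of \(\mathcal{I}_\Delta^2/\mathcal{I}_\Delta^3\) in \(\mathcal{P}^2_X/\mathrm{tors}\) is a locally free subbundle of rank \(\binom{n+1}{2}\), forcing the quotient \(\mathcal{P}^1_X/\mathrm{tors}\) to be locally free of rank \(n+1\). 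Finally, by Nobile's classical theorem in characteristic zero, \(\NashO(X)\cong X\) implies that \(X\) is non-singular.

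The hard part is the torsion control in the third step: one must show that \(\mathrm{tors}(\mathcal{P}^2_X)\) fully accounts for \(\mathrm{tors}(\mathcal{P}^1_X)\) via the exact sequence above, i.e., every torsion section of \(\mathcal{P}^1_X\) lifts to a torsion section of \(\mathcal{P}^2_X\). This is precisely where the restrictions to \(\operatorname{char}(k)=0\) and to order \(d=2\) are essential: the symmetrization requires divisibility by \(2\), and higher-order principal parts have a longer filtration whose pieces do not admit a comparably clean splitting. Most of the technical work should go into making the symmetrization and the torsion-lifting rigorous at every (possibly singular) point of \(X\).
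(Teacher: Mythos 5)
Your top-level strategy --- reduce to the classical Nobile theorem by showing $\NashT(X)\cong X$ forces $\NashO(X)\cong X$ --- is exactly the paper's (Theorem~\ref{NashntoNashO}), but the mechanism you propose for that reduction has a genuine gap. You want to run the filtration $0\to\mathcal{I}_\Delta^2/\mathcal{I}_\Delta^3\to\mathcal{P}^2_X\to\mathcal{P}^1_X\to 0$ and argue that a ``canonical Taylor-expansion section'' of $\mathrm{Sym}^2\Omega^1_X\hookrightarrow\mathcal{P}^2_X$ makes the image of $\mathcal{I}_\Delta^2/\mathcal{I}_\Delta^3$ in $\widetilde{\mathcal{P}}^2_X$ a locally free subbundle, so that the quotient $\widetilde{\mathcal{P}}^1_X$ is locally free. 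No such canonical section exists: the filtration on principal parts does not split canonically even in characteristic zero (a splitting amounts to a choice of coordinates, or a connection), and on the smooth locus any such splitting tells you nothing new, since everything there is already locally free. At a singular point, showing that the image of $\mathcal{I}_\Delta^2/\mathcal{I}_\Delta^3$ in $\widetilde{\mathcal{P}}^2_X$ has constant corank $d+1$ is essentially equivalent to the statement you are trying to prove, and neither the symmetrization nor the ``torsion-lifting'' you flag supplies an actual argument for it; knowing that $\widetilde{\mathcal{P}}^2_X$ is locally free and that $\widetilde{\mathcal{P}}^1_X$ is a torsion-free quotient of it of generic rank $d+1$ does not force local freeness of the quotient. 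Your diagnosis of where $\operatorname{char}(k)=0$ enters (invertibility of $2$ on $\mathrm{Sym}^2$) is also not the real obstruction.

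What the paper does instead is bound the fiber dimension of $\widetilde{P}^2_{R,+}$ from below by a completely different device: it chooses a generic Noether normalization $A=k[x_1,\dots,x_d]\hookrightarrow R$ and proves the injectivity of $P^2_{A,+}\otimes_A R/m_R\to\widetilde{P}^2_{R,+}\otimes_R R/m_R$ (the $\mathrm{HN}_2$ property, Theorem~\ref{HNN_n=2}). Combined with the hypothesis that $\widetilde{P}^2_{R,+}$ is free of rank $\binom{d+2}{2}-1$, this injection between vector spaces of equal dimension is an isomorphism, so $\widetilde{P}^2_{R,+}$ is generated by the monomials in $d^2x_1,\dots,d^2x_d$; projecting onto $\widetilde{\Omega}_R$ shows it is generated by $d$ elements, hence free of rank $d$, giving $\NashO(X)\cong X$. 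The injectivity itself is where all the work lies: it is proved by constructing a Puiseux arc $\beta:R\to L[[t]]$ with $\beta(x_i)=u_it^a$ for algebraically independent $u_i$ (Lemma~\ref{arc}, which is where characteristic zero is actually used, via Newton--Puiseux), and then computing explicitly in $\widehat{P}^2_{L[[t]]/L,+}$ modulo the submodule $N_L$ to show no nontrivial combination of the $d^2x_i$ and $d^2x_jd^2x_l$ dies. None of this valuation-theoretic input appears in your proposal, and without a substitute for it your reduction step does not go through.
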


 We show that theorems A and B are achieved as corollaries to the following theorem.

\begin{introthm}[see~\cref{NashntoNashO}]
Assume the hypothesis of theorem A or theorem B, then $\NashO(X) \cong X$.
\end{introthm}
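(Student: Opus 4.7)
The plan is to construct a canonical $X$-morphism $\pi_{n,1}: \Nashn(X) \to \NashO(X)$ and then deduce $\NashO(X) \cong X$ from the hypothesis via a short section argument. Throughout, let $\nu_n: \Nashn(X) \to X$ and $\nu_1: \NashO(X) \to X$ denote the structure maps, so that the assertion ``$\Nashn(X) \cong X$'' is to be read as the statement that $\nu_n$ is an isomorphism.

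First, I would construct $\pi_{n,1}$ using Yasuda's realization of the higher Nash blow-up. The variety $\Nashn(X)$ is the closure, inside the relative Grassmannian of rank-$\binom{n+d}{d}$ quotients of the $n$-th principal-parts sheaf $\mathcal{P}^n_{X/k}$, of the graph of the canonical locally free quotient on the smooth locus $X_{sm}$; the analogous construction applied to $\mathcal{P}^1_{X/k}$ yields $\NashO(X)$. The natural truncation $\mathcal{P}^n_{X/k} \twoheadrightarrow \mathcal{P}^1_{X/k}$ sends the tautological rank-$\binom{n+d}{d}$ quotient over $X_{sm}$ to the tautological rank-$(d+1)$ quotient, so passage to the closures of graphs should descend to the desired $X$-morphism $\pi_{n,1}$. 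An equivalent viewpoint: the rational map $\Nashn(X) \dashrightarrow \NashO(X)$ that is the identity on the common open $X_{sm}$ extends to an honest morphism by the valuative criterion applied to the proper morphism $\nu_1$.

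Given $\pi_{n,1}$, the remaining argument is short. Set $\sigma_n := \nu_n^{-1}$ and $\sigma := \pi_{n,1} \circ \sigma_n : X \to \NashO(X)$. Then
\[
\nu_1 \circ \sigma \;=\; \nu_1 \circ \pi_{n,1} \circ \sigma_n \;=\; \nu_n \circ \sigma_n \;=\; \mathrm{id}_X,
\]
so $\sigma$ is a section of $\nu_1$. Since $\nu_1$ is projective, hence separated, $\sigma$ is a closed immersion. Its image $\sigma(X) \subseteq \NashO(X)$ is closed, irreducible, and of dimension $\dim X = \dim \NashO(X)$ (as $\nu_1$ is birational), so $\sigma(X) = \NashO(X)$ by irreducibility. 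Thus $\sigma$ is an isomorphism onto $\NashO(X)$, and together with $\nu_1 \circ \sigma = \mathrm{id}_X$ this forces $\nu_1$ itself to be an isomorphism.

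The hard part will be the construction of $\pi_{n,1}$. Specifically, one must verify that the truncation of principal parts sends the \emph{universal} rank-$\binom{n+d}{d}$ quotient on all of $\Nashn(X)$ (not only its restriction to $X_{sm}$) to a quotient of rank exactly $d+1$ of the pullback of $\mathcal{P}^1_{X/k}$, rather than a quotient of strictly smaller rank. This is automatic on the dense open $X_{sm}$, and I expect it to hold on all of $\Nashn(X)$ via upper semicontinuity of rank in families, combined with the fact that $\NashO(X)$ is by construction the closure of the graph locus where this truncated rank equals $d+1$.
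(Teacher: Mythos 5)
There is a genuine gap, and it sits exactly where you flagged ``the hard part'': the existence of the comparison morphism $\pi_{n,1}\colon \Nashn(X)\to\NashO(X)$. By the universal property of $\NashO(X)=\Nash(X,\mathcal{P}_{X,+}^1)$, producing $\pi_{n,1}$ amounts to showing that $\widetilde{\nu_n^*\mathcal{P}_{X,+}^1}$ is locally free on $\Nashn(X)$, and neither of your suggested mechanisms delivers this. Semicontinuity runs the wrong way: if $K$ denotes the image in the universal quotient $Q_n$ of the kernel of $\nu_n^*\mathcal{P}_{X}^n\to\nu_n^*\mathcal{P}_{X}^1$, then the fibre dimension of $Q_n/K$ is $d+1$ at points over $X_{\operatorname{sm}}$ but can only jump \emph{up} at special points, and ``fibre dimension at least $d+1$ everywhere'' is not a locally free rank-$(d+1)$ quotient, which is what a morphism to $\Grass_{d+1}(\mathcal{P}_{X}^1)$ requires. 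The valuative criterion applied to the proper map $\nu_1$ only extends the rational map $\Nashn(X)\dashrightarrow\NashO(X)$ across codimension-one points (and only after normalizing the source), not everywhere. Whether the higher Nash blow-ups form a tower $\cdots\to\NashT(X)\to\NashO(X)$ is a question already raised by Yasuda and is not known in general, so it cannot be dispatched in a sentence. A further warning sign is that your argument never uses the standing hypotheses (gradedness, or $n=2$ with $\operatorname{char}(k)=0$): if it were complete it would prove the implication for every variety and every $n$, which is precisely the case nobody can do.

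Your section argument in the middle paragraph is fine once $\pi_{n,1}$ exists, but the paper takes a different, purely local route that avoids constructing $\pi_{n,1}$ altogether. From $\Nashn(X)\cong X$ it deduces that $\widetilde{P}_{R,+}^{n}$ is free of rank $\binom{n+d}{d}-1$ at every point; the $\mathrm{HN}_n$ property (proved via the arc construction, and this is exactly where the graded or $n=2$ hypotheses enter) makes $P_{A,+}^n\otimes_A R/m_R\to\widetilde{P}_{R,+}^{n}\otimes_R R/m_R$ injective, hence an isomorphism by dimension count, so Nakayama's lemma shows $\widetilde{P}_{R,+}^{n}$ is generated by the monomials in $d^n(x_1),\dots,d^n(x_d)$. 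Pushing this through the surjection $\widetilde{P}_{R,+}^{n}\to\widetilde{\Omega}_R$ shows $\widetilde{\Omega}_R$ is generated by $d$ elements and has rank $d$, hence is free, and the universal property of $\NashO(X)$ then gives $\NashO(X)\cong X$. To repair your approach you would have to prove local freeness of $\widetilde{\nu_n^*\mathcal{P}_{X,+}^1}$ on $\Nashn(X)$, which is essentially equivalent to the work the paper actually does.
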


We can say that the Nobiles's theorem implies the higher Nobile's theorem in
these cases.

%%%%%%%%%%%%%%%%%%%%%%%%%%%%%%%%%%%%%%%%%%%%%%%%%%%%%%%%%%%%%%%%%%%%%%%%%%%
%% Preliminaries
%%%%%%%%%%%%%%%%%%%%%%%%%%%%%%%%%%%%%%%%%%%%%%%%%%%%%%%%%%%%%%%%%%%%%%%%%%%

\subsection*{Terminology and assumptions}
Throughout the paper, we will assume $k$ to be an algebraically closed field of
arbitrary characteristic unless specified. All our varieties will be reduced and irreducible
separated schemes of finite type over $k$. We say an affine variety $X \subseteq \mathbb A^s$ is graded if the defining ideal of $X$ is generated by homogeneous elements.
 
\section{Preliminaries}

In this section, we give the definition, some properties, and facts of the module of principal parts of order $n$. Further, we introduce the notion of higher Nash blow-up.
\vspace{1ex}

\subsection*{The module of principal parts}
~\vspace{1ex}

\noindent The module of principal parts is a generalized version of the module of K\"{a}hler differentials.

\begin{definition} \cite{MR0238860}
    Let $X$ be a Noetherian $k$ scheme. Consider the diagonal morphism $\Delta: X \longrightarrow X \times_k X$ and $\mathcal{I}_{\Delta}$ be the ideal sheaf defining the diagonal $\Delta(X) \subset X \times_{k} X$. We define the sheaf of principal parts of order $n$, denoted $\mathcal{P}_{X/k}^n$, as
    \begin{align*}
        \mathcal{P}_{X/k}^n := \mathcal{O}_{X \times_{k} X}/\mathcal{I}_{\Delta}^{n+1}.
    \end{align*}
     Similarly, define $\mathcal{P}_{X/k,+}^n := \mathcal{I}_{\Delta}/\mathcal{I}_{\Delta}^{n+1}$. It is easy to see that $\mathcal{P}_{X/k,+}^1 \cong \Omega_{X/k}$.
Now, let's define the module of principal parts of order $n$ for a $k$-algebra. When $X = \operatorname{Spec}R$ for a $k$-algebra $R$, the construction is as follows. Consider the multiplication map 
\[
\varphi: R \otimes R \longrightarrow R, \quad a \otimes b \longmapsto ab.
\]
    Then $\mathcal{P}_{X/k}^n$ corresponds to
    \[
    P_{R/k}^n := {R \otimes R}/{I_{R}^{n+1}}.
    \]
    where $I_R:= \text{ker}(\varphi)$. Similarly, define the $R$-module $P_{R/k,+}^{n} := {I_R}/{I_{R}^{n+1}}$ and, we have the following isomorphism
\[
P_{R/k}^n \cong R \oplus P_{R/k,+}^{n}.
\]
Notice that when $n=1$, $P_{R/k,+}^1 \cong \Omega_{R/k}$. Consider the canonical derivation of order $n$
\[
d_{R}^n : R \longrightarrow P_{R/k,+}^{n}, \quad x \longmapsto (1 \otimes x - x \otimes 1) + I_{R}^{n+1}.
\]
The $R$-module $P_{R/k,+}^{n}$ is generated by the image of the map $d_{R}^n$.
\end{definition}

To lighten the notation, we will write $\mathcal{P}_{X}^n = \mathcal{P}_{X/k}^n$, and $P_{R,+}^n = P_{R,+}^{n}$ when the base field is understood from the context. Similarly, we will write $d^n = d_{R}^n$ when $R$ is clear from the context. We now discuss a fact about canonical derivation, and the functoriality of the module of principal parts.

\begin{remark} \cite{MR3987965} \label{HigherLeibniz}
    Let $x, y \in R$, then a simple computation shows that
    \begin{align*}
        {d^n}(xy) = {d^n}(x){d^n}(y) + y{d^n}(x) + x{d^n}(y).
    \end{align*}
\end{remark}

\begin{remark}
    Let $R$ and $S$ be $k$-algebras and $f: R \longrightarrow S$ be a $k$-algebra homomorphism. We see $S$ as a $R$-module, then we have the following $R$-module homomorphisms
    \begin{align*}
       P_{R}^n \longrightarrow P_{S}^n
    \end{align*}
    and
    \begin{align*}
        P_{R,+}^n \longrightarrow P_{S,+}^n.
    \end{align*}
Similarly, we have the following $S$-module homomorphisms
\begin{align*}
        P_{R}^n \otimes_R S \longrightarrow P_{S}^n
    \end{align*}
    and
    \begin{align*}
        P_{R,+}^n \otimes_R S \longrightarrow P_{S,+}^n.
    \end{align*}
\end{remark}
\vspace{1ex}

\subsection*{Higher Nash blow-ups}
~\vspace{1ex}

\noindent We will now define the notion of higher Nash blow-up which is a higher version of the Nash blow-up of a variety.

\begin{definition} (Hilbert scheme of points)
    Let $X$ be a variety over $k$. The Hilbert scheme of points on $X$ is defined as
    \[
    \Hilbl(X) = \{ [Y] \mid Y \subset X, \ \dim(Y) = 0 \ \text{and} \ \text{length}(Y) = l \}.
    \]
where length($Y) = \sum_{y \in Y}{\dim_{k}(\mathcal{O}_{Y,y})}$.
\end{definition}

\begin{definition} \cite{MR2371378}
    Let $X$ be a variety of dimension $d$ over $k$. For a point $x \in X$, the $n$-th infinitesimal neighborhood of $x$ is $x^{(n)} := \operatorname{Spec}\mathcal{O}_{X,x}/m_x^{n+1}$. If $x$ is a smooth point, then $[x^{(n)}] \in \Hilb(X)$. This gives us a morphism
\begin{align*}
    \sigma_n :
    X_{\operatorname{sm}} \longrightarrow \Hilb(X),
    \quad
    x \longmapsto [x^{(n)}].
\end{align*}
The Nash blow-up of order $n$ of $X$, denoted
$\Nashn(X)$, is defined to be the closure of the graph
$\Gamma_{\sigma_n}$ inside $X \times_k \Hilb(X)$. It comes equipped with the natural projection
\begin{align*}
    \pi_n: \Nashn(X) \longrightarrow X
\end{align*}
\end{definition}
Higher Nash blow-ups and modules of principal parts are related via the following classical construction.
\begin{definition} \cite{MR0238860}
    Let $X$ be reduced Noetherian scheme and $\mathcal{M}$ a coherent
$\mathcal{O}_{X}$-module locally free of constant rank $r$ on an open dense
subscheme $U \subseteq X$. The Nash blowup of $X$ associated
to $\mathcal{M}$, denoted $\Nash(X,\mathcal{M})$, is defined to be the closure of the graph of the natural map $U \longrightarrow \Grass_{r}(\mathcal{M})$.
\end{definition}
The next result relates the above two definitions.
\begin{prop} \cite{MR2371378}
    Let $X$ be a variety of dimension $d$, then for every $n \in \mathbb{N}$, we have
    \[
    \Nashn(X) \cong \Nash(X,P_{X}^n) \cong \Nash(X,P_{X,+}^n)
    \]
\end{prop}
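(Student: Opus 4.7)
The plan is to unwind the universal properties of the Hilbert scheme and of the Grassmannian and match the two natural maps out of the smooth locus $U = X_{\operatorname{sm}}$. Set $r := \binom{n+d}{d}$. I would first establish that $P_X^n|_U$ is locally free of rank $r$ with canonical fiber identification
\[
P_X^n \otimes_{\mathcal{O}_X} k(x) \;\cong\; \mathcal{O}_{X,x}/m_x^{n+1} \;=\; \mathcal{O}_{x^{(n)}}
\]
for $x \in U$; this reduces, via local coordinates, to the observation that for a regular local $k$-algebra $(R,m)$ of dimension $d$ one has $P_R^n \otimes_R R/m \cong R/m^{n+1}$, a $k$-vector space of dimension $\binom{n+d}{d}$. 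The splitting $P_X^n \cong \mathcal{O}_X \oplus P_{X,+}^n$ then forces $P_{X,+}^n|_U$ to be locally free of rank $r-1$.

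For the first isomorphism, I would interpret $\sigma_n$ via the universal property of $\Hilb(X)$: the closed subscheme $\mathcal{Z} := V(\mathcal{I}_\Delta^{n+1}) \cap (U \times_k X) \subseteq U \times_k X$ is finite and flat of degree $r$ over $U$, and pushforward along the first projection identifies $(\operatorname{pr}_1)_* \mathcal{O}_{\mathcal{Z}}$ with $P_X^n|_U$ as an $\mathcal{O}_U$-module. Its fiber at $x \in U$ is the quotient $\mathcal{O}_{x^{(n)}}$, so the tautological map $U \to \Grass_r(P_X^n|_U)$ used in the definition of $\Nash(X,P_X^n)$ agrees with $\sigma_n$ under the natural morphism sending an appropriate length-$r$ subscheme to its structure sheaf, viewed as a quotient of the corresponding fiber of $P_X^n$. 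Since the two graphs over $U$ are identified, so are their closures in $X \times \Hilb(X)$ and $X \times \Grass_r(P_X^n)$.

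For the second isomorphism, the canonical splitting $P_X^n \cong \mathcal{O}_X \oplus P_{X,+}^n$ decomposes the tautological rank-$r$ quotient of $P_X^n|_U$ as the sum of the identity quotient of $\mathcal{O}_U$ and the tautological rank-$(r-1)$ quotient of $P_{X,+}^n|_U$. Modding out by the split $\mathcal{O}_X$-summand yields a morphism from an open subscheme of $\Grass_r(P_X^n)$ containing the relevant graph to $\Grass_{r-1}(P_{X,+}^n)$ that identifies the two tautological maps from $U$, and taking closures gives the desired isomorphism.

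The main obstacle I expect is the clean scheme-theoretic formulation of the natural morphism from (an open subscheme of) $\Hilb(X)$ to $\Grass_r(P_X^n)$ in the second step; the construction is transparent pointwise via fibers, but writing it down as a morphism of schemes requires locating the universal length-$r$ family above the image of $\sigma_n$ and identifying its pushforward with the pullback of $P_X^n$. Once this step is in place, the rest of the proof is a direct manipulation of universal families together with the canonical splitting of the principal parts sheaf.
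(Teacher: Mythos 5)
The paper does not actually prove this proposition; it is quoted from Yasuda's paper \cite{MR2371378}, so there is no internal proof to compare against. Judged on its own, your outline follows the standard strategy (identify $(\operatorname{pr}_1)_*\mathcal{O}_{V(\mathcal{I}_\Delta^{n+1})}$ with $P_X^n$ over the smooth locus $U$, match the two tautological maps out of $U$, and use the splitting $P_X^n\cong\mathcal{O}_X\oplus P_{X,+}^n$), but it has a genuine gap at the decisive step. From ``the two graphs over $U$ are identified'' you conclude ``so are their closures''; this does not follow, because the closures are taken in two different ambient spaces, $X\times_k\Hilb(X)$ and $\Grass_r(P_X^n)$, between which there is no a priori morphism. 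The entire content of the proposition is the construction of comparison morphisms over the boundary. Concretely one needs: (i) a morphism $\Nashn(X)\to\Grass_r(P_X^n)$ over $X$, which requires showing that the universal finite flat family $\mathcal{W}$ pulled back to the closure $\overline{\Gamma_{\sigma_n}}$ is still a closed subscheme of the preimage of $V(\mathcal{I}_\Delta^{n+1})$, so that $(\operatorname{pr}_1)_*\mathcal{O}_{\mathcal{W}}$ is a locally free rank-$r$ quotient of $\pi_n^*P_X^n$; and (ii) a morphism $\Nash(X,P_X^n)\to\Hilb(X)$, which requires showing that the kernel of the tautological quotient of the pullback of $P_X^n$ remains an \emph{ideal} (not just a submodule) over the boundary, so that the quotient is a family of length-$r$ subschemes. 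Both facts hold because the relevant conditions are verified on the dense open $U$ and propagate by torsion-freeness of locally free sheaves over the integral schemes $\Nashn(X)$ and $\Nash(X,P_X^n)$; but these density arguments are precisely the proof, and your proposal omits them while presenting the conclusion as formal.

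A smaller point: for the second isomorphism, your proposed ``mod out by the $\mathcal{O}_X$-summand'' map is only defined on the open locus of $\Grass_r(P_X^n)$ where $\mathcal{O}_X\to\mathcal{Q}$ is a subbundle, and a closure need not stay inside an open locus, so ``taking closures'' again needs justification. It is cleaner to go the other way: the splitting induces a closed embedding $\Grass_{r-1}(P_{X,+}^n)\hookrightarrow\Grass_r(P_X^n)$ over $X$, sending a quotient $\mathcal{Q}'$ of $P_{X,+}^n$ to $\mathcal{O}_X\oplus\mathcal{Q}'$, which matches the two tautological sections over $U$; a closed embedding does carry the closure of $U$ isomorphically onto the closure of its image, which gives $\Nash(X,P_{X,+}^n)\cong\Nash(X,P_X^n)$ immediately.
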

Notice that when $n=1$, $\NashO(X)$ is isomorphic to the classical Nash blow-up of $X$. 

\begin{definition}[Torsion submodule]
    Let $R$ be a Noetherian ring and $M$ be a $R$-module. We define torsion submodule of $M$, denoted $\tors_R(M)$, as
    \begin{align*}
        \tors_R(M) = \{m \in M \mid \exists \ \text{a non-zero divisor} \ r \in R \ \text{such that} \ rm = 0\}.
    \end{align*}
From now on, we will denote
\begin{align*}
    \widetilde{M} = M/\tors_R(M).
\end{align*}
This definition extend naturally to the global case. For a quasi-coherent module $\mathcal{M}$ on $X$, we write
\begin{align*}
    \widetilde{\mathcal{M}} = \mathcal{M}/\tors_R(\mathcal{M}).
\end{align*}
\end{definition}

We now give the universal property of higher Nash blow-up.
\begin{prop} \cite{MR2371378,MR1218672} \label{universal}
    Let $X$ be a variety of dimension $d$ over $k$. Let $n \in \mathbb{N}$, and we have the natural projection map
    \begin{align*}
        \pi_n: \Nashn(X) \longrightarrow X.
    \end{align*}
    Suppose we also have a morphism $f: Y \longrightarrow X$ such that $\widetilde{f^*(\mathcal{P}_{X,+}^n)}$ is locally free, then there exists a unique morphism $g: Y \longrightarrow X$ such that $\pi_n\circ g = f$. Equivalently, one can use $\widetilde{f^*(\mathcal{P}_{X,+}^n)}$ instead of $f^*(\widetilde{\mathcal{P}_{X,+}^n})$.
\end{prop}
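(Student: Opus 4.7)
The plan is to construct the lift $g$ by combining the universal property of the Grassmannian with the realization $\Nashn(X) \cong \Nash(X,\mathcal{P}_{X,+}^n)$ as the closure of the graph of the tautological map $\sigma : X_{\operatorname{sm}} \to \Grass_r(\mathcal{P}_{X,+}^n)$, where $r = \binom{n+d}{d}$. First, I would extract a map to the Grassmannian from the hypothesis: the canonical surjection
\[
f^*\mathcal{P}_{X,+}^n \twoheadrightarrow \widetilde{f^*\mathcal{P}_{X,+}^n}
\]
exhibits the target as a locally free rank-$r$ quotient of $f^*\mathcal{P}_{X,+}^n$ (the rank matches the generic rank of $\mathcal{P}_{X,+}^n$ because $Y$ is a variety and $f$ generically hits $X_{\operatorname{sm}}$). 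By the universal property of $\Grass_r$, this quotient corresponds to a unique morphism $\phi : Y \to \Grass_r(\mathcal{P}_{X,+}^n)$, and pairing with $f$ gives $(f,\phi) : Y \to X \times_k \Grass_r(\mathcal{P}_{X,+}^n)$.

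The core step is to show that $(f,\phi)$ factors through the closed subscheme $\Nashn(X) \subseteq X \times_k \Grass_r(\mathcal{P}_{X,+}^n)$. Set $U = X_{\operatorname{sm}}$ and $V = f^{-1}(U)$. Over $U$, the sheaf $\mathcal{P}_{X,+}^n$ is itself locally free of rank $r$, so $f^*\mathcal{P}_{X,+}^n|_V$ is already torsion-free and coincides with $\widetilde{f^*\mathcal{P}_{X,+}^n}|_V$. Consequently, $\phi|_V$ agrees with the composition $V \xrightarrow{f|_V} U \xrightarrow{\sigma} \Grass_r(\mathcal{P}_{X,+}^n)$, so that $(f,\phi)|_V$ lies in the graph $\Gamma_\sigma \subseteq \Nashn(X)$. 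Since $V$ is open and dense in $Y$ and $\Nashn(X)$ is closed in $X \times_k \Grass_r(\mathcal{P}_{X,+}^n)$, the scheme-theoretic image of $(f,\phi)$ is contained in $\Nashn(X)$, producing the required $g : Y \to \Nashn(X)$ with $\pi_n \circ g = f$.

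Uniqueness is then immediate from density: on $V$, $\pi_n$ restricts to an isomorphism onto $U$, so any lift $g'$ of $f$ satisfies $g'|_V = (f|_V,\phi|_V) = g|_V$, and separatedness of $\Nashn(X)$ forces $g' = g$ on all of $Y$. For the final equivalence of hypotheses, right-exactness of pullback yields a chain of surjections $f^*\mathcal{P}_{X,+}^n \twoheadrightarrow f^*\widetilde{\mathcal{P}_{X,+}^n} \twoheadrightarrow \widetilde{f^*\mathcal{P}_{X,+}^n}$ that coincide over $V$, so either of the two local freeness conditions produces the same rank-$r$ quotient on $V$ and the preceding argument goes through unchanged. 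The main subtlety I anticipate is the density of $V$ in $Y$: if $f$ were to factor entirely through the singular locus, then $V = \emptyset$ and both the identification of $\phi|_V$ with $\sigma \circ f|_V$ and the closure argument collapse; this is precisely where the rank-matching of $\widetilde{f^*\mathcal{P}_{X,+}^n}$ with $\binom{n+d}{d}$ is essential, forcing the generic point of $Y$ into $X_{\operatorname{sm}}$ and hence $V$ to be non-empty and dense.
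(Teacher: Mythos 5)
The paper does not prove this proposition at all---it is imported from Yasuda and Oneto--Zatini---so there is no internal proof to compare with. Your skeleton is the standard argument for the universal property of $\Nash(X,\mathcal{M})$: package the surjection $f^*\mathcal{P}_{X,+}^n \twoheadrightarrow \widetilde{f^*\mathcal{P}_{X,+}^n}$ as a morphism to the Grassmannian, check that over $V=f^{-1}(X_{\operatorname{sm}})$ it agrees with $\sigma_n\circ f$, and conclude by density of $V$, closedness of $\Nashn(X)$, and separatedness. That is the right route (reading the statement's ``$g\colon Y\to X$'' as the intended $g\colon Y\to\Nashn(X)$).

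The gap is in the one step you yourself single out as the subtlety, and it is circular as written. In your first paragraph the quotient has the right rank ``because $Y$ is a variety and $f$ generically hits $X_{\operatorname{sm}}$''; in your last paragraph $f$ generically hits $X_{\operatorname{sm}}$ because the rank is right. Neither follows from the stated hypothesis: local freeness of $\widetilde{f^*\mathcal{P}_{X,+}^n}$ alone does not determine its rank. For instance, if $Y$ is a point mapping to a singular point $x$, then $f^*\mathcal{P}_{X,+}^n$ is a vector space, hence free, of dimension typically exceeding the generic rank, while $\pi_n^{-1}(x)$ may be positive-dimensional, so no unique lift exists; the proposition is simply false without an extra hypothesis. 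One must either assume $f$ dominant from a variety $Y$ (which is also what makes $f^\#$ of a nonzerodivisor a nonzerodivisor, as your chain $f^*\mathcal{P}_{X,+}^n\twoheadrightarrow f^*\widetilde{\mathcal{P}_{X,+}^n}\twoheadrightarrow\widetilde{f^*\mathcal{P}_{X,+}^n}$ tacitly needs), or assume the locally free rank equals the generic rank, which is $\binom{n+d}{d}-1$, not $\binom{n+d}{d}$ (that is the rank of $\mathcal{P}_X^n$). With the rank hypothesis the repair is clean: the rank of $\widetilde{f^*\mathcal{P}_{X,+}^n}$ equals $\dim_{k(x)}\mathcal{P}_{X,+}^n\otimes k(x)$ at $x=f(\eta_Y)$, and by Nakayama this equals $\binom{n+d}{d}-1$ precisely when $\mathcal{P}_{X,+}^n$ is locally free of that rank near $x$; so one should run your density argument over the open locus where $\mathcal{P}_{X,+}^n$ is locally free of generic rank (which contains $X_{\operatorname{sm}}$ and over which $\pi_n$ is an isomorphism), rather than over $X_{\operatorname{sm}}$ itself. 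In the paper's only application ($Y=X$, $f=\operatorname{id}$ in Theorem~\ref{NashntoNashO}) dominance is automatic, so nothing downstream breaks, but your write-up does not close this step.
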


\section{Reducing higher Nobile's theorem to classical Nobile's theorem}

We will use the following setup for the rest of this paper.
\begin{setup}\label{setup}
We use the following notations.
\begin{itemize}
    \item We fix an affine algebraic vaiety $X \subset \mathbb{A}^\text{s}$ of dimension $d$ over the base field $k$.
    \item We write $\mathbb{A}^\text{s} = \operatorname{Spec}S$ and $X = \operatorname{Spec}B$ where $B = S/I$.
    \item We fix a closed point $x \in X$, and set $R: = B_{m_x} = \mathcal{O}_{X,x}$ where $m_x$ is the maximal ideal corresponding to $x$.
    \item Let $K$ be the field of fractions of $R$.
    %\item By Noether normalization, we assume that $A:= k[x_1, \ldots, x_d] \subseteq B$.
\end{itemize}
\end{setup}

To prove the generalized Nobile's theorem we need a key property. We define this property as follows:
\begin{definition}[$\mathrm{HN}_n$]\label{HNn}
    Let $R$ be a $k$-algebra, $m_R$ be a maximal ideal and $\dim R = d$. Given $n \in \mathbb{N}$, we say that $R$ satisfies the $\text{HN}_n$ property if there exists algebraically independent elements $x_1, \ldots, x_d \in m_R$, corresponding to a ring homomorphism $A:= k[x_1, \ldots, x_d] \longrightarrow R$, such that the induced map
\begin{align*}
    P_{A,+}^n \otimes_A R/m_R \longrightarrow \widetilde{P}_{R,+}^{n} \otimes_R R/m_R
\end{align*}
is injective. Notice that this is equivalent to saying that
\begin{align*}
    P_{A}^n \otimes_A R/m_R \longrightarrow \widetilde{P}_{R}^{n} \otimes_R R/m_R
\end{align*}
is injective.
\end{definition}

We now see how higher Nobile's theorem reduces down to the classical Nobile's theorem.

\begin{thm}\label{NashntoNashO}
    Let $X$ be an affine algebraic variety, and $R$ be as in Setup~\ref{setup}. Suppose $R$ satisfies the \nameref{HNn} property, and $\Nashn(X) \cong X$ for some $n \in \mathbb{N}$, then $\NashO(X) \cong X$
\end{thm}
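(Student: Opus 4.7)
The plan is to reinterpret the hypothesis $\Nashn(X)\cong X$ via the universal property (\cref{universal}) as saying that the torsion-free quotient $\widetilde{P_{R,+}^n}$ is a free $R$-module of rank $N:=\binom{n+d}{d}-1$, and then to descend this freeness from order $n$ down to order $1$ by combining the $\mathrm{HN}_n$ hypothesis with the natural surjections $P_+^n \twoheadrightarrow P_+^1 = \Omega_{-/k}$ induced by $I_-^{2}/I_-^{n+1}\subset I_-/I_-^{n+1}$. Once $\widetilde{\Omega_{R/k}}$ is shown to be free of rank $d$, \cref{universal} applied at $n=1$ to $\mathrm{id}_X$ yields $\NashO(X)\cong X$.

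The first step is to bootstrap the $\mathrm{HN}_n$ injectivity at the closed point into a genuine isomorphism of $R$-modules. Since $A=k[x_1,\dots,x_d]$ is smooth over $k$, the module $P_{A,+}^n$ is $A$-free of rank $N$, hence $P_{A,+}^n\otimes_A R$ is $R$-free of rank $N$. Functoriality of principal parts along $A\hookrightarrow R$ produces a natural comparison map
\begin{equation*}
\phi : P_{A,+}^n\otimes_A R \longrightarrow \widetilde{P_{R,+}^n},
\end{equation*}
which is therefore a map between free $R$-modules of the same rank $N$. By $\mathrm{HN}_n$, its reduction $\phi\otimes_R R/m_R$ is an injection of $k$-vector spaces of the same dimension $N$, hence an isomorphism. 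The matrix of $\phi$ in suitable bases thus has unit determinant modulo $m_R$, so its determinant is a unit in the local ring $R$, and $\phi$ is an isomorphism.

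The second step is a diagram chase. The quotient maps $I/I^{n+1}\twoheadrightarrow I/I^{2}$ for $A$ and for $R$, together with functoriality along $A\hookrightarrow R$, assemble into a commutative square
\begin{equation*}
\begin{tikzcd}
P_{A,+}^n\otimes_A R \arrow[r, "\sim"] \arrow[d] & \widetilde{P_{R,+}^n} \arrow[d] \\
\Omega_{A/k}\otimes_A R \arrow[r] & \widetilde{\Omega_{R/k}}
\end{tikzcd}
\end{equation*}
whose vertical arrows are surjective (the right one because any surjection $M\twoheadrightarrow N$ sends torsion to torsion, so descends to a surjection $\widetilde M\twoheadrightarrow\widetilde N$). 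Since the top arrow is an isomorphism, the composite along the top and right edges is surjective, which forces the bottom arrow to be surjective. Now $\Omega_{A/k}\otimes_A R\cong R^d$, while $\widetilde{\Omega_{R/k}}$ is torsion-free of generic rank $d$ (because $k$ is algebraically closed hence perfect, so the function field $K/k$ is separably generated and $\dim_K\Omega_{K/k}=d$). The kernel of any surjection $R^d\twoheadrightarrow\widetilde{\Omega_{R/k}}$ must therefore be torsion inside a torsion-free module, hence zero, so $\widetilde{\Omega_{R/k}}\cong R^d$ is $R$-free of rank $d$.

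The most delicate part should be confirming cleanly that the comparison map $\phi$ is well defined and that its reduction modulo $m_R$ genuinely agrees with the map appearing in the definition of $\mathrm{HN}_n$, so that the injectivity hypothesis can be invoked with the correct normalization; the subsequent Nakayama-plus-determinant argument in step one and the rank-counting argument for $\Omega$ in step two are both short and formal once the identification is pinned down.
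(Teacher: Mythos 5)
Your proposal is correct and follows essentially the same route as the paper's proof: both use the hypothesis to get $\widetilde{P}_{R,+}^{n}$ free of rank $\binom{n+d}{d}-1$, combine the $\mathrm{HN}_n$ injectivity with a dimension count over $R/m_R$ and Nakayama (your determinant argument is an equivalent packaging) to make the comparison map from $P_{A,+}^n\otimes_A R$ an isomorphism, push through the surjection onto $\widetilde{\Omega}_{R/k}$ to conclude it is free of rank $d$, and finish with the universal property at $n=1$. Your step-two justification that the kernel of the surjection $R^d\twoheadrightarrow\widetilde{\Omega}_{R/k}$ is torsion inside a torsion-free module is in fact slightly more careful than the paper's bare rank count.
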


\begin{proof}
    We assume $\Nashn(X) \cong X$. Then by the definition of $n$-th Nash blow-up we have that $\widetilde{\pi_{n}^*(\mathcal{P}_{X,+}^n)}$ is locally free of rank $D-1 := \binom{n+d}{d}-1$. But $\pi_n$ is an isomorphism, so $\widetilde{\mathcal{P}}_{X,+}^n$ is locally free of rank $D-1$ which implies $\widetilde{P}_{\mathcal{O}_{X,x},+}^n$ is free of rank $D-1$ for all $x \in X$. Let $R := \mathcal{O}_{X,x}$. By Theorem~\ref{HNNgraded}, we have $A = k[x_1, \ldots, x_d]$ such that
    \begin{align*}
        {P_{A}^n} \otimes_A R/m_R \longrightarrow {\widetilde{P}_{R}^{n}} \otimes_R R/m_R,
    \end{align*}
is injective. It is equivalent to say that
\begin{align*}
        {P_{A,+}^n} \otimes_A R/m_R \longrightarrow {\widetilde{P}_{R,+}^{n}} \otimes_R R/m_R,
    \end{align*}
is injective. But $\dim(P_{A,+}^n \otimes_A R/m_R) = \dim(\widetilde{P}_{R,+}^{n} \otimes_R R/m_R) = D-1$, and so the above map is an isomorphism. Thus, $\widetilde{P}_{R,+}^{n} \otimes_R R/m_R$ is generated by $\big\{[{\overline{{d^n}({x_1})}]^{\alpha_1}} \cdots [{\overline{{d^n}({x_d})}]^{\alpha_d}} \mid 1 \leq |\alpha| \leq n\big\}$ as $R/m_R$ vector space. Consider the surjective map
\begin{align*}
    \widetilde{P}_{R}^{n} \longrightarrow \widetilde{P}_{R}^{n} \otimes_R R/m_R \cong \frac{\widetilde{P}_{R}^{n}}{m_R\widetilde{P}_{R}^{n}}.
\end{align*}
Further, it follows from Nakayama's lemma that the $R$-module $\widetilde{P}_{R,+}^{n}$ is freely generated by $\big\{{\overline{{d^n}({x_1})}^{\alpha_1}} \cdots {\overline{{d^n}({x_d})}^{\alpha_d}} \mid 1 \leq |\alpha| \leq n\big\}$. Furthermore, we have a surjective $R$-module homomorphism
\begin{align*}
    P_{R,+}^{n} = I_R/I_R^{n+1} \longrightarrow \Omega_{R} = I_R/I_R^2.
\end{align*} 
This induces the following surjective map
\begin{align*}
    \widetilde{P}_{R,+}^{n} \longrightarrow \widetilde{\Omega}_{R}.
\end{align*}
It follows that the $R$-module $\widetilde{\Omega}_{R}$ is generated by $\big\{\overline{d({x_1})}, \cdots, \overline{d({x_d})}\big\}$. But $\operatorname{rank}(\widetilde{\Omega}_{R}) = \dim R = d$. Thus, $\widetilde{\Omega}_{R} = \Omega_{\mathcal{O}_{X,x}}/\text{tors}_{\mathcal{O}_{X,x}}(\Omega_{\mathcal{O}_{X,x}})$ is free of rank $d$. This is equivalent to $\widetilde{\Omega}_X$ being locally free of rank $d$. So, by the universal property~\ref{universal} of Nash blow-up, there exists a unique map $\varphi : X \longrightarrow \NashO(X)$. Thus, $\pi : \text{Nash}_1(X) \longrightarrow X$ is an isomorphism.
\end{proof}

\section{Arc lemma}
In this section, we construct an arc and observe some of its important properties.

\begin{remark}\label{projection}
    We can assume $x \in X$ the origin of $\mathbb{A^\text{s}}$. Now we consider linear projections of the form $\mathbb{A^\text{s}} \longrightarrow Y$, where $Y$ is an affine subspace of $\mathbb{A^\text{s}}$ of dimension $d$ containing $0$. For each such projection we can choose affine coordinates $x_1, \ldots, x_s$ in $\mathbb{A^\text{s}}$ centered at $0$ such that $Y$ is defined by the equations $x_{d+1} = \ldots = x_s = 0$ and the projection is given by $(x_1, \ldots, x_s) \in \mathbb{A^\text{s}} \mapsto (x_1, \ldots, x_d) \in Y$ (notice that $(x_1, \ldots, x_d)$ give coordinates in $Y$). Let $Z$ be the subspace of $\mathbb{A^\text{s}}$ given by $x_1, \ldots, x_d = 0$. This $Z$ gives the direction of projection. The fibers of the projection are translations of Z, that is, the affine subspaces of the form $y + Z$, where $y \in Y$ and $+$ is the addition for the vector space structure of $\mathbb{A^\text{s}}$ induced by the coordinates $x_1, \ldots, x_s$. Each such projection induces a map $X \longrightarrow Y$, and let $\mathcal{H}$ be the set of all such induced projections. When this induced map is finite we get a Noether normalization which means that $k[x_1, \ldots, x_d] \longrightarrow B$ is a finite integral extension. Then the proof of the Noether normalization lemma shows that the subset $\mathcal{U} \subseteq \mathcal{H}$ of the projections that give a Noether normalization is an open dense subset of $\mathcal{H}$.
\end{remark}

Firstly, we give the graded arc lemma.
\begin{lemma}[Graded arc lemma]\label{gradedarc}
Let $X$ be a graded algebraic variety, and $R$ be as in Setup~\ref{setup}. Then for a generic choice of coordinates $x_1, \ldots, x_s$ for $\mathbb{A^\text{s}}$, we get $\{x_1, \ldots, x_d\}$ a transcendental basis of $K$ over $k$. Further, there also exists a field $L$, and an arc $\beta: R \longrightarrow L[[t]]$ such that $\beta$ is an inclusion and $\beta(x_i) = {u_i}{t}$, where $u_i \in L$ for all $1 \leq i \leq s$, and $\{u_1, \ldots, u_d\}$ is a transcendental basis of $L$ over $k$.
\end{lemma}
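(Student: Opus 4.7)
The plan is to exploit the graded structure by taking $L := K$, setting $u_i \in K$ to be the image of $x_i$, and defining the arc on generators by $x_i \mapsto u_i t$; the homogeneity of the defining ideal will then make everything work. To obtain suitable coordinates, I first invoke Remark~\ref{projection}: for a generic linear change of coordinates on $\mathbb{A}^s$ centered at the origin, the induced map $A := k[x_1,\ldots,x_d] \hookrightarrow B$ is a Noether normalization. Since $B$ is a $d$-dimensional domain and the extension is finite, $x_1,\ldots,x_d$ are algebraically independent over $k$ and form a transcendence basis of $K$. Crucially, since the change of coordinates is linear and centered at the origin, it preserves the grading on $S = k[x_1,\ldots,x_s]$, so $I$ remains homogeneous and $B$ retains its graded structure.

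Now set $L := K$ and let $u_i \in K$ denote the image of $x_i$ for each $1 \le i \le s$; by the previous paragraph, $u_1,\ldots,u_d$ is a transcendence basis of $L$ over $k$. I define $\beta$ on the polynomial ring $S$ by $x_i \mapsto u_i t$ and claim it descends to a $k$-algebra homomorphism $B \to L[[t]]$. Since $I$ is generated by homogeneous elements, it suffices to check vanishing on a homogeneous generator: if $f \in I$ has degree $m$, then
\[
\beta(f) = f(u_1 t,\ldots,u_s t) = t^m f(u_1,\ldots,u_s) = 0,
\]
where the last equality holds because $(u_1,\ldots,u_s)$ are the coordinates of the generic point of $\operatorname{Spec} B$ under the closed embedding $\operatorname{Spec} B \hookrightarrow \mathbb{A}^s$.

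Finally, I extend $\beta$ to $R = B_{m_x}$ and verify injectivity. Since $x$ is the origin, $m_x$ is the irrelevant ideal in the graded ring $B$, so any $g \in B \setminus m_x$ has nonzero degree-$0$ part $g_0 \in k^\times$, making $\beta(g) = g_0 + t \cdot (\cdots)$ a unit in $L[[t]]$; the universal property of localization then extends $\beta$ uniquely to $R$. For injectivity on $B$, decompose $g = \sum_{i=0}^{N} g_i$ into its homogeneous parts and observe
\[
\beta(g) = \sum_{i=0}^{N} g_i \cdot t^i \in L[[t]],
\]
where each homogeneous piece $g_i \in B$ is viewed in $K = L$ via the inclusion $B \hookrightarrow K$. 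If $\beta(g) = 0$, every $g_i$ vanishes in $K$, hence in $B$, so $g = 0$; injectivity on $R$ follows formally. The graded hypothesis is really doing all the work here, and I do not anticipate a significant obstacle — the one point requiring care is maintaining the graded structure through the generic coordinate change, which is automatic precisely because that change is linear and fixes the origin.
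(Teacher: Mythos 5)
Your proof is correct and follows essentially the same route as the paper's: choose generic linear coordinates via Remark~\ref{projection} so that $k[x_1,\ldots,x_d]\hookrightarrow B$ is a Noether normalization, and then scale the generic point, using homogeneity of $I$ to see that $x_i\mapsto u_it$ kills the defining ideal and that the resulting map is injective on homogeneous pieces. The only (harmless) deviation is your choice $L:=K$ with $u_i$ the image of $x_i$, where the paper instead introduces fresh transcendentals $u_1,\ldots,u_d$, takes $L$ to be the algebraic closure of $k(u_1,\ldots,u_d)$, and embeds $R$ into $L$ via integrality over $A$; both choices satisfy the statement, and the later arguments only use that $u_1,\ldots,u_d$ are algebraically independent over $k$.
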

    %\item $\beta(x_j) = {u_j}{t}$, where $u_j \in L$ for all $d+1 \leq j \leq s$.
\begin{proof}
   From Setup~\ref{setup} before the theorem, we know $B$ is a finitely generated $k$-algebra. By the above Remark~\ref{projection}, we can pick a projection and coordinates $x_1, \ldots, x_s$ for $\mathbb{A^\text{s}}$. We can assume that $\{x_1, \ldots, x_d\}$ is a transcendental basis of $K$ over $k$ such that $A \subseteq B$, where $A = k[x_1, \ldots, x_d]$ is a Noether normalization. Since $B$ is a domain, we have $B \subseteq R$, and so $A \subseteq R$. Let $u_1, \ldots, u_d$ be new transcendental elements, and let $L$ be the algebraic closure of the field of fractions of $k[u_1, \ldots, u_d]$.
 
 Consider the generic point of $X$, and since $R$ is integral over $A$, we have a map $\varphi: R \longrightarrow L$ such that $\varphi(x_i) = u_i$ for all $1 \leq i \leq d$. We can now scale the generic point. More precisely, since $X$ is graded, we get an arc $\widetilde{\beta}: S \longrightarrow L[[t]]$ via $\widetilde{\beta}(x_i) = {u_i}{t}$, for all $1 \leq i \leq s$. Then, since the ideal $I$ is homogeneous, $\widetilde{\beta}$ factors via a ring map $\beta: R \longrightarrow L[[t]]$ still satisfying $\beta(x_i) = u_it$ for all $1 \leq i \leq s$.
\end{proof}

For general rings, we have the following version.

\begin{lemma}[Arc lemma]\label{arc}
Let $X$ and $R$ be as in Setup~\ref{setup}, and assume $\text{char}(k) = 0$. Then for a generic choice of coordinates $x_1, \ldots, x_s$ for $\mathbb{A^\text{s}}$, we get $\{x_1, \ldots, x_d\}$ a transcendental basis of $K$ over $k$. Further, there also exists a field $L$, and an arc $\beta: R \longrightarrow L[[t]]$ such that $\beta$ is an inclusion, and
\begin{enumerate}[label=(\alph*)] 
    \item $\beta(x_i) = {u_i}{t^a}$, where $u_i \in L$ for all $1 \leq i \leq d$, and and $\{u_1, \ldots, u_d\}$ is a transcendental basis of $L$ over $k$.
    \item $\beta(x_j) = {u_j}{t^{b_j}}$, where $u_j \in L[[t]]$ and $b_j \geq a$ for all $d+1 \leq j \leq s$. 
\end{enumerate}
\end{lemma}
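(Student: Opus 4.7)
The plan is to imitate the graded arc lemma, replacing its homogeneous scaling trick by a Puiseux-series construction which is available in characteristic zero. To secure the bound $b_j \ge a$ in (b), I would choose coordinates so that the direction of projection meets the tangent cone of $X$ at $x$ transversely.

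By Remark~\ref{projection}, a generic choice of coordinates gives a Noether normalization $A := k[x_1, \ldots, x_d] \hookrightarrow B$, making $\{x_1, \ldots, x_d\}$ a transcendence basis of $K$ over $k$. On top of this open dense condition I would impose that the projection direction $Z := V(x_1, \ldots, x_d) \subset \mathbb{A}^s$ meets the tangent cone $\mathrm{TC}_x(X)$ only in $\{0\}$; since $\mathrm{TC}_x(X)$ is a $d$-dimensional cone and $Z$ has complementary dimension $s-d$, this is a further open dense condition on the coordinates.

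Next, let $u_1, \ldots, u_d$ be new indeterminates and put $L := \overline{k(u_1, \ldots, u_d)}$, so that $\{u_1, \ldots, u_d\}$ is a transcendence basis of $L$ over $k$. Define $\gamma \colon A \to L[[s]]$ by $x_i \mapsto u_i s$ and extend to $\mathrm{Frac}(A) \hookrightarrow L((s))$. Since $\operatorname{char}(k) = 0$, the finite extension $K/\mathrm{Frac}(A)$ is separable, so $K \otimes_{\mathrm{Frac}(A)} L((s))$ is a finite product of field extensions of $L((s))$, and by the Puiseux theorem each embeds into $L((s^{1/N}))$ for some $N$. Picking any factor yields an embedding $K \hookrightarrow L((s^{1/N}))$; substituting $t := s^{1/N}$ and setting $a := N$, we obtain an injection $\beta \colon K \hookrightarrow L((t))$ with $\beta(x_i) = u_i t^a$ for $i \le d$. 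For $b \in B$, $b$ is integral over $A$, so $\beta(b)$ is integral over $\beta(A) \subset L[[t]]$ and hence lies in $L[[t]]$ because $L[[t]]$ is integrally closed in $L((t))$. The preimage $\beta^{-1}((t)) \cap R$ is a prime of $R$ lying over $m_A$, and since $A \hookrightarrow B$ is integral and $R$ is local, $m_R$ is the unique such prime; hence $\beta$ restricts to a local, injective map $\beta|_R \colon R \to L[[t]]$.

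The main obstacle is the bound in (b). Write $\beta(x_j) = u_j t^{b_j}$ with $u_j \in L[[t]]^{\times}$ for $j > d$, and set $c := \min_{1 \le k \le s} v_t(\beta(x_k))$. The leading-direction vector $\vec{v} := ((\beta(x_k)/t^c)|_{t=0})_{k=1}^s \in \mathbb{A}^s$ is nonzero by construction. For any $f \in I$, expanding into homogeneous components $f = f_e + f_{e+1} + \cdots$ (with $f_e \ne 0$) and extracting the lowest $t$-order term of the identity $f(\beta(x_1), \ldots, \beta(x_s)) = 0$ gives $f_e(\vec{v}) = 0$; since the leading forms of elements of $I$ cut out $\mathrm{TC}_x(X)$, we obtain $\vec{v} \in \mathrm{TC}_x(X)$. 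If $c < a$, then $v_t(\beta(x_i)) = a > c$ for $i \le d$ would force the first $d$ coordinates of $\vec{v}$ to vanish, placing $\vec{v} \in Z \cap \mathrm{TC}_x(X) = \{0\}$, a contradiction. Hence $c \ge a$, and in particular $b_j \ge a$ for all $j > d$.
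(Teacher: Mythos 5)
Your proposal is correct and follows essentially the same route as the paper: a generic Noether normalization, the Newton--Puiseux theorem to extend the arc $x_i \mapsto u_i s$ from $A$ to an embedding $K \hookrightarrow L((s^{1/a}))$, integrality of $R$ over $A$ to land in $L[[t]]$ with $t = s^{1/a}$, and the same leading-term/tangent-cone transversality argument (imposing $Z \cap \mathrm{TC}_x(X) = \{0\}$ generically) to force $b_j \ge a$. The only cosmetic difference is that you obtain the Puiseux embedding via the splitting of $K \otimes_{\mathrm{Frac}(A)} L((s))$ using separability, whereas the paper embeds $K_0^{\mathrm{ac}}$ into $L((s))^{\mathrm{ac}}$ and restricts; these are interchangeable.
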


\begin{proof}
We now start by proving statement (a) of the theorem. Again by the Remark~\ref{projection}, we can pick a projection and coordinates $x_1, \ldots, x_s$ for $\mathbb{A^\text{s}}$. We can assume that $\{x_1, \ldots, x_d\}$ is a transcendental basis of $K$ over $k$ such that $A \subseteq B$, where $A = k[x_1, \ldots, x_d]$ is aNoether normalization. Since $B$ is a domain, we have $B \subseteq R$, and so $A \subseteq R$. Let $u_1, \ldots, u_d$ be new transcendental elements, and let $L$ be the algebraic closure of the field of fractions of $k[u_1, \ldots, u_d]$. Then consider the arc $\beta_0: A \longrightarrow L[[s]]$ such that ${\beta_0}(x_i) = {u_i}{s}$. Notice that $\beta_0$ is clearly an inclusion. Further, let $K$ and $K_0$ be the fields of fractions of $R$ and $A$ respectively. Then $K$ is an algebraic extension of $K_0$. We denote by $K_0^{\text{ac}}$ the algebraic closure of $K_0$, so we have $K_0 \subseteq K \subseteq K_0^{\text{ac}}$. Next we denote by $L((s))$ the field of fractions of $L[[s]]$, and we let $L((s))^{\text{ac}}$ be the algebraic closure of $L((s))$. Let $\delta_0: K_0 \longrightarrow L((s))$ be the map induced by $\beta_0$. Then notice that the morphism $\delta^{\text{ac}}: K_0^{\text{ac}} \longrightarrow L((s))^{\text{ac}}$ exists by the definition of algebraic closure. Since $L$ is an algebraically closed field of characteristic zero and by the Newton-Puiseux theorem, we have $L((s))^{\text{ac}} = \bigcup_{a \geq 1} L((s^{1/a}))$. Furthermore, we know that $K$ is a finite field extension of $K_0$ since $R$ is a finite ring extension of $A$. By looking at the images via $\delta^{\text{ac}}$ of each of the finitely many generators of $K$ over $K_0$, we see that $\delta^{\text{ac}}(K)$ lies inside of $L((s^{1/a}))$ for some non-negative integer $a$. This gives the map $\beta: K \longrightarrow L((s^{1/a}))$ and the following commutative diagram:
\[
    \begin{tikzcd}[row sep=small]
        K_0 \arrow{r}{\delta_0} \arrow[swap]{d}
        & L((s)) \arrow{d}
    \\
        K \arrow{r}{\delta} \arrow[swap]{d}
        & L((s^{1/a})) \arrow{d}
    \\
        K_0^{\text{ac}} \arrow{r}{\delta^{\text{ac}}}
        & L((s))^{\text{ac}}.
    \end{tikzcd}
\]
Next notice that $L[[s^{1/a}]]$ is the integral closure of $L[[s]]$ inside of $L((s^{1/a}))$. Since $R$ is integral over $A$, we see that $\delta(R) \subseteq L[[s^{1/a}]]$ giving the map $\beta: R \longrightarrow L[[s^{1/a}]]$ and the following commutative diagram:
\[ 
    \begin{tikzcd}[row sep=small]
        A \arrow{r}{\beta_0} \arrow[swap]{d}
        & L[[s]] \arrow{d}
    \\
        R \arrow{r}{\beta}
        & L[[s^{1/a}]].
    \end{tikzcd}
\]
Also, by the construction, $\beta$ is an inclusion. Let $t = s^{1/a}$ and then we have constructed the arc $\beta: R \longrightarrow L[[t]]$ such that $\beta(x_i) = {u_i}{t^a}$ where $u_i \in L$ for all $1 \leq i \leq d$.
    
The next step is to prove the part (b) of the theorem. Consider the maximal ideal $m_x \subset \mathcal{O}_X$ and let $b \geq 1$ be order of contact of $a$ with $m_x$ which means $\beta^{-1}(m_x)  \cdot L[[t]] = (t^b)L[[t]]$. Here, $\beta$, $L[[t]]$ and $a$ are from part (a). Notice that $m_x$ is generated by $x_1, \ldots, x_s$. Therefore, for each $\{1, \ldots, s\}$ we have $\beta(x_i) = t^{b_i}.{u_i}(t)$ with ${u_i}(t) \in L[[t]] \mysetminus (t)$, $b_i \in \mathbb{Z}_{\geq 1} \cup \{\infty\}$ and $b = \text{min}\{b_1, \ldots, b_s\} \in \mathbb{Z}_{\geq 1}$. We see that $a = b_1 = \ldots = b_d \geq b$. Here, we use the convention $t^{\infty} = 0$.

Let $C \in \mathbb{A^\text{s}}$ be the tangent cone to $X$ at $x$. The ideal of $C$ is $I_C = \operatorname{in}(I) = \{\text{in}(f) : f \in I\}$ where $\operatorname{in}(f)$ denotes the lowest homogeneous part of the polynomial $f$. Since $\beta$ is an arc on $X$, we know that $\beta(f) = 0$ for $f \in I$. For $f \in I$, write $f = f_e + f_{e+1} + \cdots$, where $e \geq 1$ and $f_j$ is homogeneous of degree $j$. Write $\beta(x_i) = {v_i}t^b +$ (h.o.t) for $i \in \{1, \ldots, s\}$, where (h.o.t) means higher order terms. Let $v = (v_1, \ldots, v_s)$. Then we see that $0 = \beta(f) = {f_e}(v)t^{eb} + \text{(h.o.t)} = \text{in}(f)(v)t^{eb}$ + (h.o.t). Therefore $\text{in}(f)(v) = 0$ and thus $v \in C$. We also know that $v \neq x$, and it follows that $v \in C \mysetminus \{0\}$.

Recall that $Z$ is given by $x_1 = \cdots = x_d = 0$. Using the notation of Remark$~\ref{projection}$, let $\mathcal{V} \subseteq \mathcal{H}$ be the set of all projections for which $Z$ intersects $C$ equally at the origin, i.e. $Z \cap C = \{0\}$. Then $\mathcal{V}$ is a dense open subset of $\mathcal{H}$. Thus, pick any projection in $\mathcal{U} \cap \mathcal{V}$. Notice that $v_i = {u_i}(0)$ if $b_i = b$ and $v_i = 0$ if $b_i > b$. Then $v \notin Z$ since $v \neq x$ and $v \in C$. Therefore, $a = b$ and $b_j \geq a$ for all $d+1 \leq j \leq s$.
\end{proof}

%%%%%%%%%%%%%%%%%%%%%%%%%%%%%%%%%%%%%%%%%%%%%%%%%%%%%%%%%%%%%%%%%%%%%%%%%%%
%% Nobile's theorem for higher Nash blow-up
%%%%%%%%%%%%%%%%%%%%%%%%%%%%%%%%%%%%%%%%%%%%%%%%%%%%%%%%%%%%%%%%%%%%%%%%%%%
%\section{Higher Nobile's theorem for graded case}

%In this section, we prove the property $(*)$ for the graded case. Firstly, we give the construction of an arc of a Noetherian local ring. Using this arc construction, it is easy to see the property $(*)$ for the graded case. 

%We now use the following notations for all the further sections

\section{Construction}\label{construction}
In this section, we are going to construct the following commutative diagram which will be essential in proving the \nameref{HNn} property.
\begin{equation}\label{diagram}
    \begin{tikzcd}
P_{A,+}^n \otimes_A R \arrow[r,"{d{\varphi}}"] \arrow[d] & \widetilde{P}_{R,+}^{n} \arrow[r, "{d{\beta}}"] \arrow[d] & \widehat{P}_{L[[t]]/L,+}^n \arrow[d] \\
{P_{A,+}^n \otimes_A R/m_R} \arrow[r,"{\overline{d{\varphi}}}"] & {\widetilde{P}_{R,+}^{n} \otimes_R R/m_R} \arrow[r, "\overline{{d{\beta}}}"] & {\widehat{P}_{L[[t]]/L,+}^n}/{N_L}.
 \end{tikzcd}
\end{equation}

Firstly, let $\varphi: A \longrightarrow R$ be the Noether normalization map and $\beta: R \longrightarrow L[[t]]$ be the arc constructed in the graded arc lemma~\ref{gradedarc} or the arc lemma~\ref{arc}. Recall that in the graded case, $k$ has arbitrary characteristic; in the non-graded case, we need to assume $\operatorname{char}(k) = 0$.

We now consider the following composition:
\[ \begin{tikzcd}
A \arrow{r}{\varphi} & R \arrow{r}{\beta} & L[[t]].
\end{tikzcd}
\]
This induces the following composition of $R$-module homomorphisms for all $n \in \mathbb{N}$
\[ \begin{tikzcd}
R \otimes_A P_{A,+}^n \arrow{r}{d{\varphi}} & P_{R,+}^{n} \arrow{r}{d{\beta}} & P_{L[[t]]/L,+}^n.
\end{tikzcd}
\]
We also have the ring morphism $P_{L[[t]]/L}^n \longrightarrow \widehat{P}_{L[[t]]/L}^n$ where $\widehat{P}_{L[[t]]/L}^n$ is the $(t)$-adic completion of the $L[[t]]$-module $P_{L[[t]]/L}^n$. We know $P_{L[[t]]/L}^n \cong P_{L[[t]]/L,+}^n \oplus L[[t]]$, and so we get that $\widehat{P}_{L[[t]]/L}^n \cong \widehat{P}_{L[[t]]/L,+}^n \oplus L[[t]]$. Thus, one gets a morphism $P_{L[[t]]/L,+}^n \longrightarrow \widehat{P}_{L[[t]]/L,+}^n$. This gives us
\[ \begin{tikzcd}
R \otimes_A P_{A,+}^n \arrow{r}{d{\varphi}} & P_{R,+}^{n} \arrow{r}{d{\beta}} & \widehat{P}_{L[[t]]/L,+}^n.
\end{tikzcd}
\]
We work with $\widehat{P}_{L[[t]]/L,+}^n$ because it is free of rank $n$ with basis $\{d^nt, (d^nt)^2, \cdots{, (d^nt)^n}\}$. On the other hand, $P_{L[[t]]/L,+}^n$ is not finitely generated.
\begin{lemma}
    Let $\tors_R(P_{R,+}^{n})$ be the torsion $R$-submodule of $P_{R,+}^{n}$. Then we have that $\tors_R(P_{R,+}^{n}) \subseteq \operatorname{ker}(d{\beta})$.
\end{lemma}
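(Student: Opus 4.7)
The plan is to exploit the two structural facts already in hand: (i) by the arc lemma the map $\beta\colon R\hookrightarrow L[[t]]$ is injective, and (ii) the paper explicitly notes that $\widehat{P}_{L[[t]]/L,+}^n$ is a free $L[[t]]$-module of rank $n$, in particular torsion-free over the domain $L[[t]]$. These together force any $R$-torsion in $P_{R,+}^n$ to map to zero.

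More precisely, I would pick an arbitrary element $m\in\tors_R(P_{R,+}^n)$ and choose a non-zero-divisor $r\in R$ with $rm=0$; since $R$ is a domain (it is the local ring at a closed point of the irreducible variety $X$ from Setup~\ref{setup}), this amounts to choosing any nonzero $r\in R$ with $rm=0$. Applying the $R$-module homomorphism $d\beta\colon P_{R,+}^n\longrightarrow \widehat{P}_{L[[t]]/L,+}^n$ yields $r\cdot d\beta(m)=0$ in the target. The key point is that the $R$-module structure on $\widehat{P}_{L[[t]]/L,+}^n$ is inherited from its $L[[t]]$-structure via the ring map $\beta$; consequently $r\cdot d\beta(m)=\beta(r)\cdot d\beta(m)$.

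Because $\beta$ is an inclusion and $r\neq 0$, one has $\beta(r)\neq 0$ in $L[[t]]$. Since $L[[t]]$ is an integral domain and $\widehat{P}_{L[[t]]/L,+}^n$ is free (hence torsion-free) of rank $n$ over $L[[t]]$, multiplication by the nonzero element $\beta(r)$ is injective on $\widehat{P}_{L[[t]]/L,+}^n$. Therefore $d\beta(m)=0$, and $m$ lies in $\ker(d\beta)$. As $m$ was arbitrary, this gives $\tors_R(P_{R,+}^n)\subseteq \ker(d\beta)$.

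There is essentially no technical obstacle here; the only point that deserves a moment's care is the verification that the composition $P_{R,+}^n\to P_{L[[t]]/L,+}^n\to \widehat{P}_{L[[t]]/L,+}^n$ is genuinely $R$-linear with the $R$-action on the right pushed through $\beta$ (this is just functoriality of principal parts and of $(t)$-adic completion), and the invocation of the freeness statement for $\widehat{P}_{L[[t]]/L,+}^n$ that the paper has already recorded.
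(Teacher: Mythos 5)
Your proposal is correct and is essentially the same argument as the paper's: take $\omega\in\tors_R(P_{R,+}^n)$ with $f\omega=0$ for some nonzero $f$, apply $d\beta$ to get $\beta(f)\,d\beta(\omega)=0$, and use that $\beta$ is an inclusion together with the freeness (hence torsion-freeness) of $\widehat{P}_{L[[t]]/L,+}^n$ to conclude $d\beta(\omega)=0$. Your extra remark on checking that the $R$-action on the target factors through $\beta$ is a reasonable point of care but does not change the route.
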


\begin{proof}
    Let $\omega \in \tors_R(P_{R,+}^{n})$. Then there exists a non-zero $f \in R$ such that $f\omega = 0$. This implies $d{\beta}(f\omega) = 0$. But $d{\beta}(f\omega) = \beta(f)d{\beta}(\omega)$, and so $\beta(f)d{\beta}(\omega) = 0$. We know that $\beta$ is an inclusion. Thus, $\beta(f) \neq 0$. Also, $\widehat{P}_{L[[t]]/L,+}^n$ is free and hence torsion-free. Then we get $d{\beta}(\omega) = 0$. Therefore, $\tors_R(P_{R,+}^{n}) \subseteq \operatorname{ker}(d{\beta})$.
\end{proof}
Then from the above lemma, $d{\beta}$ factors through $\widetilde{P}_{R,+}^{n}$. We then have the following commutative diagram:

\[ 
\begin{tikzcd}
P_{A,+}^n \otimes_A R \arrow[r,"{d{\varphi}}"] \arrow[dr] & P_{R,+}^{n} \arrow[r, "{d{\beta}}"] \arrow[d, "\pi"] & \widehat{P}_{L[[t]]/L,+}^n \\
& \widetilde{P}_{R,+}^{n} \arrow[ur]
\end{tikzcd}.
\]

Consider the quotient map $\pi: P_{R,+}^{n} \longrightarrow \widetilde{P}_{R,+}^{n}$. We know that $P_{R,+}^{n}$ is generated as an $R$-module by $\{{{d^n}({x_1})^{\alpha_1}} \cdots {{d^n}({x_s})^{\alpha_s}} | 1 \leq |\alpha| \leq n\}$, where $\alpha = (\alpha_1, \ldots, \alpha_S)$ denotes a multi-index, and $|\alpha| = \alpha_1 + \cdots + \alpha_s$. Then $\widetilde{P}_{R,+}^{n}$ is generated by $\{\pi({{d^n}({x_1})^{\alpha_1}} \cdots {{d^n}({x_s})^{\alpha_s}}) \mid 1 \leq |\alpha| \leq n\}$. We will do abuse of notation and say $\widetilde{P}_{R,+}^{n}$ is generated as an $R$-module by $\{{{d^n}({x_1})^{\alpha_1}} \cdots {{d^n}({x_s})^{\alpha_s}} \mid 1 \leq |\alpha| \leq n\}$, and we write

\[ \begin{tikzcd}
P_{A,+}^n \otimes_A R \arrow{r}{d{\varphi}} & \widetilde{P}_{R,+}^{n} \arrow{r}{d{\beta}} & \widehat{P}_{L[[t]]/L,+}^n.
\end{tikzcd}
\]

Denote $d{\beta_A}:= d{\beta} \circ d{\varphi}: P_{A,+}^n \otimes_A R \longrightarrow \widehat{P}_{L[[t]]/L,+}^n$ which is the induced map from the arc $\beta \circ \varphi: A \longrightarrow L[[t]]$. %Notice that $d{\varphi}$ is an inclusion, and hence $d{\beta_A}$ is also an inclusion.

Consider the $L[[t]]$-submodule of $\widehat{P}_{L[[t]]/L,+}^n$, denoted $N_L$, defined as
\begin{align}\label{N_L}
  N_L :=  \operatorname{span}\{(t^a)d{\beta}({{d^n}({x_1})^{\alpha_1}} \cdots {{d^n}({x_s})^{\alpha_s}}) \mid 1 \leq |\alpha| \leq n\},
\end{align}
where $a$ is the non-negative integer from Lemma~\ref{gradedarc} or Lemma~\ref{arc}. Then we have that $d{\varphi}({m_A}(P_{A,+}^n \otimes_A R) )\subseteq d{\beta}({m_R}\widetilde{P}_{R,+}^{n}) \subseteq N_L$ where $m_A:= (x_1, \ldots, x_d)A$ and $m_R:= (x_1, \ldots, x_s)R$ are the maximal ideals of $A$ and $R$ respectively. To see this, let $f \in m_R$ and $\omega \in \widetilde{P}_{R,+}^{n}$, then
\begin{align*}
    d\beta(f\omega) &= \beta(f)d\beta(\omega) \\ &= wt^bd\beta(\omega),
\end{align*}
where $b \geq a$ and $w$ is a unit. Thus, we get the required commutative diagram~(\ref{diagram}).

\begin{remark}\label{calculation}
    Using Remark~\ref{HigherLeibniz} with induction, we can see that
\[
d{\beta}_A({{d^n}({x_1})^{\alpha_1}} \cdots {{d^n}({x_d})^{\alpha_d}}) = {{d^n}({{u_1}{t^a}})^{\alpha_1}} \cdots {{d^n}({{u_d}{t^a}})^{\alpha_d}},
\]
where ${d^n}: L[[t]] \longrightarrow P_{L[[t]]/L,+}^n$ is the universal $n$-th order differential over $L[[t]]$ and $1 \leq |\alpha| \leq n$. We do it for the basic case and it follows easily in the general case by applying induction.
\begin{align*}
    d{\beta}_A({{d^n}({x_i}){d^n}({x_j})}) &= d{\beta}_A({d^n}(x_ix_j) - x_i{d^n}(x_j) - x_j{d^n}(x_i)) \\ &= d{\beta}_A({{d^n}(x_ix_j))} - d{\beta}_A(x_i{{d^n}(x_j))} - d{\beta}_A(x_j{{d^n}(x_i))} \\ &= {d^n}(u_itu_jt) - u_it{d^n}(u_jt) - u_jt{d^n}(u_it) \\ &= {d^n}(u_it){d^n}(u_jt). %\\ &= d{\beta}_A({{d^n}({x_i}))}d{\beta}_A({{d^n}({x_j}))}.
\end{align*}
\end{remark}
These formulas will be useful later to prove the ~\nameref{HNn} property.

\begin{lemma}\label{constlemma}
    If ${\overline{d{\beta}}_A}:= {\overline{d{\beta}} \circ \overline{d{\varphi}}}: P_{A,+}^n \otimes_A R/m_R \longrightarrow {\widehat{P}_{L[[t]]/L,+}^n}/{N_L}$ is injective, then $R$ satisfies the ~\nameref{HNn} property.
\end{lemma}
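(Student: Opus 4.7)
The plan is essentially a one-step diagram chase in the commutative diagram~(\ref{diagram}). By the construction carried out in this section, the bottom-left horizontal arrow
\[
\overline{d\varphi}: P_{A,+}^n \otimes_A R/m_R \longrightarrow \widetilde{P}_{R,+}^{n} \otimes_R R/m_R
\]
is, tautologically, the map whose injectivity defines the \nameref{HNn} property in Definition~\ref{HNn}. The reason we had to replace $\widehat{P}_{L[[t]]/L,+}^n$ by its quotient modulo $N_L$ in the right-hand column is exactly so that the bottom row is well-defined; this is the content of the inclusion $d\beta(m_R\widetilde{P}_{R,+}^{n})\subseteq N_L$ established during the construction. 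By definition, the bottom row factors the composite as $\overline{d\beta}_A = \overline{d\beta}\circ \overline{d\varphi}$.

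Granted these identifications, the lemma reduces to the elementary observation that if a composition $g \circ f$ of module homomorphisms is injective, then $f$ itself must be injective. Explicitly, if $\overline{d\varphi}(x) = \overline{d\varphi}(y)$, then applying $\overline{d\beta}$ yields $\overline{d\beta}_A(x) = \overline{d\beta}_A(y)$, and the hypothesis forces $x = y$. Hence $\overline{d\varphi}$ is injective and $R$ satisfies the \nameref{HNn} property.

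No real obstacle is anticipated here: all the nontrivial content has been pushed to the companion task of verifying that $\overline{d\beta}_A$ itself is injective. That verification is what the subsequent sections must address, typically by exploiting the explicit formulas from Remark~\ref{calculation} to show that the images ${d^n}(u_1 t^a)^{\alpha_1}\cdots {d^n}(u_d t^a)^{\alpha_d}$ are linearly independent over $R/m_R$ in the quotient $\widehat{P}_{L[[t]]/L,+}^n/N_L$.
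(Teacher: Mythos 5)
Your proof is correct and matches the paper's intent exactly: the paper's own proof simply says the claim "is clear from the construction of the commutative diagram," and your diagram chase (injectivity of a composite forces injectivity of the first factor, applied to $\overline{d\beta}_A = \overline{d\beta}\circ\overline{d\varphi}$, where $\overline{d\varphi}$ is precisely the map in Definition~\ref{HNn} for the algebraically independent $x_1,\dots,x_d$ coming from the Noether normalization) is the argument being left implicit.
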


\begin{proof}
    It is clear from the construction of the commutative diagram~(\ref{diagram}).
\end{proof}

\section{\texorpdfstring{$\operatorname{HN}_n$}{n} property for the graded case}

\begin{thm}\label{HNNgraded}
Let $X$ be a graded algebraic variety, and $R$ be as in Setup~\ref{setup}. Then $R$ satisfies the \nameref{HNn} property.
\end{thm}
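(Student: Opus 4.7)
The plan is to apply \cref{constlemma} and reduce to showing that the composition $\overline{d\beta}_A : P_{A,+}^n \otimes_A R/m_R \to \widehat{P}_{L[[t]]/L,+}^n / N_L$ is injective. First I invoke the graded arc lemma (\cref{gradedarc}) to fix generic coordinates and produce the arc $\beta(x_i) = u_i t$ with $u_1, \ldots, u_d \in L$ algebraically independent over $k$ and $u_{d+1}, \ldots, u_s \in L$. The crucial features of the graded case are that the exponent is $a = 1$ uniformly and that all $u_i$ lie in $L$ itself (not merely in $L[[t]]$).

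Next I compute the map explicitly. Since $u_i \in L$, the higher Leibniz rule of \cref{HigherLeibniz} applied to $u_i t$ yields $d^n(u_i t) = u_i\, d^n(t)$, and then \cref{calculation} gives
\[
d\beta_A\bigl(d^n(x_1)^{\alpha_1} \cdots d^n(x_d)^{\alpha_d}\bigr) = u_1^{\alpha_1} \cdots u_d^{\alpha_d} \, (d^n t)^{|\alpha|},
\]
inside the free $L[[t]]$-module $\widehat{P}_{L[[t]]/L,+}^n$ on basis $(d^n t), (d^n t)^2, \ldots, (d^n t)^n$. The same computation for the generators of $N_L$ produces $t\, u_1^{\alpha_1} \cdots u_s^{\alpha_s} (d^n t)^{|\alpha|}$; taking $\alpha = (m, 0, \ldots, 0)$ and using $u_1 \in L^{\times}$ shows $t (d^n t)^m \in N_L$ for each $1 \le m \le n$, whence $N_L = (t)\widehat{P}_{L[[t]]/L,+}^n$. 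Hence $\widehat{P}_{L[[t]]/L,+}^n / N_L$ is an $L$-vector space with basis $\overline{(d^n t)^m}$ for $1 \le m \le n$.

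Finally, since $k$ is algebraically closed and $x$ is a closed point, $R/m_R = k$. A general element $\sum_{\alpha} c_\alpha\, d^n(x_1)^{\alpha_1} \cdots d^n(x_d)^{\alpha_d}$ of $P_{A,+}^n \otimes_A R/m_R$ with $c_\alpha \in k$ maps to
\[
\sum_{m=1}^{n}\Bigl(\sum_{|\alpha|=m} c_\alpha\, u_1^{\alpha_1} \cdots u_d^{\alpha_d}\Bigr)\overline{(d^n t)^m}.
\]
Vanishing of this sum forces each inner coefficient to vanish by $L$-linear independence of the $\overline{(d^n t)^m}$, and algebraic independence of $u_1, \ldots, u_d$ over $k$ then forces every $c_\alpha$ to be zero. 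This proves injectivity.

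The main obstacle is really the verification that $N_L$ collapses to $(t)\widehat{P}_{L[[t]]/L,+}^n$, which would fail both without $a = 1$ and without the $u_i$ being scalars in $L$. Both features are specific to the graded case and are precisely what the graded arc lemma was engineered to deliver; once they are in hand, the proof is a direct translation of algebraic independence of the $u_i$ into injectivity of $\overline{d\beta}_A$.
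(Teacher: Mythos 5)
Your proposal is correct and follows essentially the same route as the paper: reduce via \cref{constlemma} to injectivity of $\overline{d\beta}_A$, use the graded arc lemma to get $a=1$ and scalar $u_i\in L$, compute $d\beta_A$ on the monomial generators as $u_1^{\alpha_1}\cdots u_d^{\alpha_d}(d^n t)^{|\alpha|}$, and conclude from the fact that elements of $N_L$ carry a factor of $t$ together with the algebraic independence of $u_1,\ldots,u_d$. Your explicit identification $N_L=(t)\widehat{P}_{L[[t]]/L,+}^n$ is a slight sharpening of the paper's one-sided containment, but the argument is otherwise the same.
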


\begin{proof}
From Lemma~\ref{constlemma}, it is enough to show that $\overline{d{\beta}}_A$ is injective. Consider the $R$-module homomorphism $d{\beta}: \widetilde{P}_{R,+}^{n} \longrightarrow \widehat{P}_{L[[t]]/L,+}^n$. Since $R$ is a graded ring, then by the graded arc lemma~\ref{gradedarc}, $a = b_{d+1} = \cdots = b_s = 1$. Then from the Remark~\ref{calculation}, we get
\begin{equation*}
    d{\beta}_A({{d^n}({x_1})^{\alpha_1}} \cdots {{d^n}({x_d})^{\alpha_d}}) = {{u_1}^{\alpha_1}\cdots}{u_d}^{\alpha_d}({d^n}t)^{|\alpha|},
\end{equation*}
where $1 \leq |\alpha| \leq n$. We look at $P_{A,+}^n \otimes_A R/m_R$ as a vector space over $k$. Let $\overline{\omega} \in \operatorname{ker}(\overline{d\beta_A})$, and we write
\[
\overline{\omega} = \sum_{1 \leq |\alpha| \leq n}p_{{\alpha_1}\ldots{\alpha_d}}{\overline{{d^n}({x_1})}^{\alpha_1}} \cdots {\overline{{d^n}({x_d})}^{\alpha_d}},
\]
where $p_{{\alpha_1}\ldots{\alpha_d}} \in k$ for all $1 \leq |\alpha| \leq n$. Then $\overline{d{\beta}_A}(\overline{\omega}) = 0$ and we get
\begin{equation*}
    \begin{aligned}
        \overline{d{\beta}_A}(\overline{\omega}) = \overline{d{\beta}_A}\Bigg(\sum_{1 \leq |\alpha| \leq n}p_{{\alpha_1}\ldots{\alpha_d}}{\overline{{d^n}({x_1})}^{\alpha_1}} \cdots {\overline{{d^n}({x_d})}^{\alpha_d}}\Bigg)
    \end{aligned}
\end{equation*}
Let $\omega = \sum_{1 \leq |\alpha| \leq n}p_{{\alpha_1}\ldots{\alpha_d}}{{d^n}({x_1})^{\alpha_1}} \cdots {{d^n}({x_d})^{\alpha_d}} \in P_{A,+}^n \otimes_A R$ be the natural pre-image of $\overline{\omega}$. Then we get
\begin{equation*}
    \begin{aligned}
      {d{\beta}_A}(\omega) = & \Bigg(\sum_{|\alpha| = 1}(p_{{\alpha_1}\ldots{\alpha_d}}{u_1}^{\alpha_1}\cdots{{u_d}^{\alpha_d}})({d^n}t) + \sum_{|\alpha| = 2}(p_{{\alpha_1}\ldots{\alpha_d}}{u_1}^{\alpha_1}\cdots{{u_d}^{\alpha_d}})({d^n}t)^2 + \\ & + \cdots \cdots + \\ & \sum_{|\alpha| = n}(p_{{\alpha_1}\ldots{\alpha_d}}{u_1}^{\alpha_1}\cdots{{u_d}^{\alpha_d}})({d^n}t)^n\Bigg) \in N_L     
    \end{aligned}
\end{equation*} 
Recall from~(\ref{N_L}) that $N_L$ is generated by 
\[
\Big\{(t^a){{d^n}({{u_1}{t^a}})^{\alpha_1}} \cdots {{d^n}({{u_d}{t^a}})^{\alpha_d}}{{d^n}({{v_{d+1}}{t^{b_{d+1}}}})^{\alpha_{d+1}}} \cdots {{d^n}({{v_s}{t^{b_s}}})^{\alpha_s}} \,\Big|\, 1 \leq |\alpha| \leq n\Big\}.
\]
 Notice that the coefficient of $({d^n}t)^i$ in ${d{\beta}_A}(\omega)$ does not involve a factor of $t$. But every element of $N_L$ has a factor of $t$. Thus,
\[
\sum_{|\alpha| = 1}(p_{{\alpha_1}\ldots{\alpha_d}}{u_1}^{\alpha_1}\cdots{{u_d}^{\alpha_d}}) = \cdots = \sum_{|\alpha| = n}(p_{{\alpha_1}\ldots{\alpha_d}}{u_1}^{\alpha_1}\cdots{{u_d}^{\alpha_d}}) = 0.
\]
But $u_1, \ldots, u_d$ are algebraically independent over $k$. Thus, $p_{{\alpha_1}\ldots{\alpha_d}} = 0$ for all $1 \leq |\alpha| \leq n$. This implies $\overline{\omega} = 0$, and so $\overline{d{\beta}}_A$ is injective.
\end{proof}

\begin{remark}\label{graded}
  Let $R$ be a graded ring as above. We know $P_{A}^n \otimes_A R \cong R \oplus (P_{A,+}^n \otimes_A R)$ which gives us $P_{A}^n \otimes_A R/m_R \cong R/m_R \oplus (P_{A,+}^n \otimes_A R/m_R)$. We also have $P_{R/k}^n \cong R \oplus P_{R,+}^{n}$, and it follows that $\widetilde{P}_{R}^{n} \cong R \oplus \widetilde{P}_{R,+}^{n}$. This implies that $\widetilde{P}_{R}^{n} \otimes_R R/m_R \cong R/m_R \oplus (\widetilde{P}_{R,+}^{n} \otimes_R R/m_R)$. Then from the above proposition, it follows that
  \begin{align*}
    \overline{d{\varphi}}: P_{A}^n \otimes_A R/m_R \longrightarrow \widetilde{P}_{R}^{n} \otimes_R R/m_R,
\end{align*}
is injective.
\end{remark}

\section{\texorpdfstring{$\operatorname{HN}_2$}{2} property for \texorpdfstring{$R$}{R}}
In this section, we prove the $\operatorname{HN}_2$ property for $R$.

\begin{thm}\label{HNN_n=2}
Let $R$ be as in Setup~\ref{setup} and assume $\operatorname{char}(k) = 0$. Then 
\begin{align*}
    {\overline{d{\beta}}_A}:= {\overline{d{\beta}} \circ \overline{d{\varphi}}}: P_{A,+}^2 \otimes_A R/m_R \longrightarrow {\widehat{P}_{L[[t]]/L,+}^2}/{N_L},
\end{align*}
in~(\ref{diagram}) is injective. In particular, $R$ satisfies the \nameref{HNn} property when $n = 2$.
\end{thm}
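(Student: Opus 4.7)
The plan is to apply Lemma~\ref{constlemma}: it suffices to prove injectivity of $\overline{d\beta}_A$. From Lemma~\ref{arc}, $\beta(x_i) = u_i t^a$ for $1 \leq i \leq d$ with $u_1,\ldots,u_d \in L$ algebraically independent over $k$, and $\beta(x_j) = u_j(t) t^{b_j}$ with $u_j(t) \in L[[t]]\setminus(t)$ and $b_j \geq a$ for $d < j \leq s$. A direct induction from Remark~\ref{HigherLeibniz} yields the Taylor-type identity $d^2 f = f'(t)\, d^2 t + \tfrac{f''(t)}{2}(d^2 t)^2$ in $\widehat{P}_{L[[t]]/L,+}^2$ for every $f \in L[[t]]$, whence (as in Remark~\ref{calculation})
\[
d\beta_A(d^2 x_i) = u_i\bigl(a t^{a-1}\, d^2 t + \tbinom{a}{2} t^{a-2}(d^2 t)^2\bigr), \quad d\beta_A(d^2 x_i\, d^2 x_j) = a^2 u_i u_j\, t^{2a-2}(d^2 t)^2
\]
for $1 \leq i,j \leq d$. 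I would take $\overline{\omega} \in \ker(\overline{d\beta}_A)$, lift to $\omega = \sum_i p_i\, d^2 x_i + \sum_{i \leq j} p_{ij}\, d^2 x_i\, d^2 x_j$ with $p_i, p_{ij} \in k$, and set $P := \sum p_i u_i$, $Q := \sum p_{ij} u_i u_j$, so that the hypothesis $d\beta_A(\omega) \in N_L$ reads
\[
aP t^{a-1}\, d^2 t + \bigl[\tbinom{a}{2} P t^{a-2} + a^2 Q t^{2a-2}\bigr](d^2 t)^2 \in N_L.
\]

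The first step is to kill $P$ via the $d^2 t$-component. Every generator of $N_L$ has its $d^2 t$-coefficient divisible by $t^{2a-1}$: for a linear generator $t^a d\beta(d^2 x_i)$ the $d^2 t$-coefficient is $t^a \beta(x_i)'$, which has order $\geq 2a-1$ because $\beta(x_i)'$ has order $\geq a-1$; quadratic generators $t^a \beta(x_i)'\beta(x_j)'(d^2 t)^2$ have no $d^2 t$-component. Therefore $aP t^{a-1}$ must lie in $(t^{2a-1}) L[[t]]$, which forces $P = 0$ since $a \neq 0$ in characteristic zero and $a - 1 < 2a - 1$. By algebraic independence of the $u_i$, each $p_i = 0$.

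For the second step, with $p_i = 0$ the equation becomes $a^2 Q t^{2a-2}(d^2 t)^2 = \sum_i f_i G_i + \sum_{i,j} g_{ij} G_{ij}$ for some $f_i, g_{ij} \in L[[t]]$. The plan is to read off leading coefficients at two matched orders. Quadratic $G_{ij}$ contribute to the $(d^2 t)^2$-coefficient only at orders $\geq 3a - 2$, and linear $G_i$ with $b_i > a$ only at orders $> 2a - 2$, so comparing the coefficient of $t^{2a-2}$ in the $(d^2 t)^2$-equation yields
\[
a^2 Q = \tbinom{a}{2}\, S, \qquad S := \sum_{i \leq d} f_i(0)\, u_i + \sum_{\substack{i > d\\ b_i = a}} f_i(0)\, u_i(0).
\]
Extracting the coefficient of $t^{2a-1}$ in the $d^2 t$-equation picks up exactly the same set of generators (all others being of strictly higher order) and gives $0 = a S$, hence $S = 0$ since $a \neq 0$. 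Substituting back, $a^2 Q = 0$, so $Q = 0$; linear independence of $\{u_i u_j : 1 \leq i \leq j \leq d\}$ over $k$ then forces every $p_{ij} = 0$, and so $\overline{\omega} = 0$.

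The main obstacle is the bookkeeping in the second step: one must verify that the same linear combination $S$ of the values $f_i(0)$ that controls the leading $t^{2a-2}$ term of the $(d^2 t)^2$-equation is exactly the one forced to vanish by the leading $t^{2a-1}$ term of the $d^2 t$-equation, so that the two couple to give $Q = 0$. The edge case $a = 1$ is mildly degenerate (then $\binom{a}{2} = 0$, so the first equation already gives $Q = 0$ directly), but the argument goes through uniformly in $a$.
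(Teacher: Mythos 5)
Your proof is correct and follows essentially the same route as the paper's: the same explicit formulas for $d\beta_A(d^2x_i)$ and $d\beta_A(d^2x_i\,d^2x_j)$, first killing the $p_i$ by noting that every $d^2t$-coefficient of an element of $N_L$ has order at least $2a-1$, then coupling the $d^2t$- and $(d^2t)^2$-components of a presentation of $d\beta_A(\omega)$ in $N_L$ to kill the $p_{ij}$. If anything your second step is the more complete one: the paper simply asserts that ``the only possibility'' is an $L[[t]]$-combination of the linear generators $t^a d\beta_A(d^2x_i)$ with $i\leq d$, whereas you check that the quadratic generators and the linear generators with $b_j>a$ cannot contribute at orders $t^{2a-2}$ and $t^{2a-1}$, and that the generators with $j>d$, $b_j=a$ enter both components through the same quantity $S$, so the conclusion $Q=0$ still follows.
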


\begin{proof}
    Consider the $R$-module homomorphism $d{\beta}_A: \widetilde{P}_{R,+}^{2} \longrightarrow \widehat{P}_{L[[t]]/L,+}^2$. If $a = 1$, then we are done by the proof of the Theorem~\ref{HNNgraded}. So, we assume $a \geq 2$. Then from Remark~\ref{calculation}, and by Remark~\ref{HigherLeibniz} we get
\begin{equation*}
    d{\beta}_A({d^2}({x_i})) = (a{u_i}t^{a-1})({d^2}t) + \bigg(\binom{a}{2}{u_i}t^{a-2}\bigg)({d^2}t)^2,
\end{equation*}
and
\begin{equation*}
    d{\beta}_A(({d^2}{(x_i}))({d^2}({x_j})) = (a^2u_iu_jt^{2a-2})({d^2}t)^2.
\end{equation*}
Let $\overline{\omega} \in \operatorname{ker}(\overline{d\beta_A})$, and we write
\[
\overline{\omega} = \sum_{1 \leq i \leq d}p_i{\overline{{d^2}({x_i})}} + \sum_{1 \leq j \leq l \leq d}p_{jl}{\overline{{d^2}({x_j}){d^2}({x_l})}},
\]
where $p_i, p_{jl} \in k$ for all $1 \leq i \leq d$ and $1 \leq j \leq l \leq d$. Let $\omega = \sum_{1 \leq i \leq d}p_i{{d^2}({x_i})} + \sum_{1 \leq j \leq l \leq d}p_{jl}{({x_j}){d^2}({x_l})} \in P_{A,+}^2 \otimes_A R$ be the natural pre-image of $\overline{\omega}$. Then we have $d{\beta}_A(\omega) \in N_L$ where 
\begin{equation*}
    d{\beta}_A(\omega) = \sum_{1 \leq i \leq d}(ap_iu_it^{a-1})({d^2}t) + \bigg(\sum_{1 \leq j \leq l \leq d}p_i\binom{a}{2}{u_i}t^{a-2} + \sum_{1 \leq j \leq l \leq d}a^2p_{jl}u_ju_lt^{2a-2}\bigg)({d^2}t)^2.
\end{equation*}
Recall from~(\ref{N_L}) that $N_L$ is generated by 
\[
\Big\{(t^a){{d^2}({{u_1}{t^a}})^{\alpha_1}} \cdots {{d^2}({{u_d}{t^a}})^{\alpha_d}}{{d^2}({{v_{d+1}}{t^{b_{d+1}}}})^{\alpha_{d+1}}} \cdots {{d^2}({{v_s}{t^{b_s}}})^{\alpha_s}} \,\Big|\, 1 \leq |\alpha| \leq 2\Big\}.
\]
Notice that for any element of $N_L$, the coeffecient of ${d^2}t$ has a factor of $t^{2a-1}$. Since $d{\beta}_A(\omega) \in N_L$, we get
\begin{equation*}
    \sum_{1 \leq i \leq d}ap_iu_i = 0.
\end{equation*}
But $u_1, \ldots, u_d$ are algebraically independent over $k$. Since $\operatorname{char}(k) = 0$, $p_i = 0$ for all $1 \leq i \leq d$. Further, we reduce down to
\begin{equation*}
    d{\beta}_A(\omega) = \bigg(\sum_{1 \leq j \leq l \leq d}a^2p_{jl}u_ju_lt^{2a-2}\bigg)({d^2}t)^2 \in N_L.
\end{equation*}
Then the only possibility is
\begin{align*}
    \bigg(\sum_{1 \leq j \leq l \leq d}a^2p_{jl}u_ju_lt^{2a-2}\bigg)({d^2}t)^2 &= \sum_{1 \leq i \leq d}g_it^ad{\beta}_A({d^2}({x_i})) \\ &= \sum_{1 \leq i \leq d}\bigg((g_ia{u_i}t^{2a-1})({d^2}t) + \bigg(g_i\binom{a}{2}{u_i}t^{2a-2}\bigg)({d^2}t)^2\bigg).
\end{align*}
where $g_i \in L[[t]]$. Further, comparing the coefficients of ${d^2}t$ on both sides, we get
\begin{align*}
    \sum_{1 \leq i \leq d}g_ia{u_i} = a\sum_{1 \leq i \leq d}g_i{u_i} = 0.
\end{align*}
And, so
\begin{align*}
    \sum_{1 \leq i \leq d}g_i\binom{a}{2}{u_i} = \binom{a}{2}\bigg(\sum_{1 \leq i \leq d}g_i{u_i}\bigg) = 0.
\end{align*}
It follows that
\begin{align*}
    \sum_{1 \leq j \leq l \leq d}a^2p_{jl}u_ju_l = 0.
\end{align*}
But again we use that $u_1, \ldots, u_d$ are algebraically independent over $k$. Again since $\operatorname{char}(k) = 0$, $p_{jl} = 0$ for all $1 \leq j \leq l \leq d$. This implies $\overline{\omega} = 0$, and so $\overline{d{\beta}}_A$ is injective.
\end{proof}

%\section{HNN property for associated graded ring}
%In this section, we prove the HNN property for the associated graded ring of $R$. For this, we will establish some important results which will lead us towards this theorem.

\section{Filtration and associated graded ring of \texorpdfstring{$P_{R}^n$}{PRn}}
We were not able to show the~\nameref{HNn} property for the general case. But in sections 8 and 9, we try to use~\nameref{HNn} property of the graded case and extend it to the general case. As a result of this approach, we get a nice corollary at the end of section 9.

We start by constructing a filtration of $P_{R}^n$ which gives us the associated graded module of $P_{R}^n$. One can then compare it with the module of principal parts of the associated graded ring. For this section, we use the Setup ~\ref{setup} and the following notations.
\begin{itemize}
    \item $(R, m_R)$ be a local ring as before (not necessary homogeneous)
    \item $G := \operatorname{gr}_{m_R}(R)$ - associated graded ring of $R$ (with respect to the $m_R$-adic filtration)
    \item $m_G := \operatorname{gr}_{m_R}^{+}(R)$ - homogeneous maximal ideal
\end{itemize}

\noindent We define the filtration of $P_{R}^{n}$ as follows:

Let $S':= k[x_1, \ldots, x_s, \xi_1, \ldots, \xi_s]$ and $m_{S'} := (x_1, \ldots, x_s, \xi_1, \ldots, \xi_s)$ be its maximal ideal. Define
\begin{align*}
    S_l = \{f \in S' \mid \operatorname{order}(f) \geq l\}
\end{align*}
We can also write it as
\begin{align*}
    S_l = (x_1, \ldots, x_s, \xi_1, \ldots, \xi_s)^l
\end{align*}

It is easy to see that $S_{l_1} \cdot S_{l_2} \subseteq S_{l_1+l_2}$
%We also have 
%\begin{align*}
    %\Big(\sum_{u}{r_up_u} + S_{i+1}\Big)\Big(\sum_{v}{s_vq_v} + S_{j+1}\Big) = \sum_{l \geq 0}\Big(\sum_{i+j=l}p_iq_j\Big) + S_{l+1}
%\end{align*}
%where $r_u, s_v \in k$, $p_u \in S_i$ and $q_v \in S_j$. 
This defines a grading on $S'$, and we get
\begin{align*}
    \operatorname{gr}(S') = \bigoplus_{l \geq 0} S_l/S_{l+1}.
\end{align*}

Let $R' = S'/I'$ where $I' = (f_1, \ldots, f_c, g_1, \ldots, g_c) + (x_i - \xi_i, \ldots, x_s - \xi_s)^{n+1}$ is an ideal in $S'$. Here, $(f_1, \ldots, f_c)$ is the ideal in $k[x_1, \ldots, x_s]$ defining our variety $X$, and $(g_1, \ldots, g_c)$ is the ideal defining $X$ but with different coordinates. It means that $f_i$ and $g_i$ are exactly the same polynomials for all $1 \leq i \leq c$ which we get by replacing $x_i$ by $\xi_i$ and vice versa. We define the filtration on $R'$ similar to $S'$ by letting
\begin{align*}
    R_l := (S_l + I')/I' \cong S_l/(I' \cap S_l).
\end{align*}
Then it follows that $R_{l_1} \cdot R_{l_2} \subseteq R_{l_1+l_2}$ and we get
\begin{align*}
    \operatorname{gr}(R') = \bigoplus_{l \geq 0} R_l/R_{l+1}.
\end{align*}
We know that $P_{R}^{n} = (R \otimes R)/I_R^{n+1}$ where $R := (k[x_1, \ldots, x_s]/(f_1, \ldots, f_c))_{(x_1, \ldots, x_s)}$.Then, $P_{R}^{n}$ can be identified with $R'_{(x_1, \ldots, x_s, \xi_1, \ldots, \xi_s)}$, i.e, 
\begin{align}\label{eq}
    P_{R}^{n} = \frac{k[x_1, \ldots, x_s, \xi_1, \ldots, \xi_s]_{(x_1, \ldots, x_s, \xi_1, \ldots, \xi_s)}}{(f_1, \ldots, f_c, g_1, \ldots, g_c) + (x_1 - \xi_1, \ldots, x_s - \xi_s)^{n+1}}.
\end{align}
We also have the initial map with the above filtration on $S$, denoted, 
\begin{align} \label{In}
    \text{In}: S' \longrightarrow \operatorname{gr}(S').
\end{align}
This map gives the homogeneous part of lowest degree of a polynomial.
The above defined filtration is $m_R$-compatible. It tells us that the associated graded ring of a localization of a ring is the same as the associated graded ring of the ring, and so we get
\begin{align*}
    \operatorname{gr}(P_{R}^{n}) = \frac{k[x_1, \ldots, x_s, \xi_1, \ldots, \xi_s]}{\text{In}((f_1, \ldots, f_c, g_1, \ldots, g_c) + (x_1 - \xi_1, \ldots, x_s - \xi_s)^{n+1})}.
\end{align*}
\begin{definition}
    Let $I = (h_1, \ldots, h_l)$ be an ideal. We say $\{h_1, \ldots, h_l\}$ is a Gr{\"o}bner basis with respect to $\text{In}$ if $\text{In}(I) = (\text{In}(h_1), \ldots, \text{In}(h_l))$.
\end{definition}

From now on, we always assume that $\{f_1, \ldots, f_c\}$ and $\{g_1, \ldots, g_c\}$ are Gr{\"o}bner bases with respect to $\text{In}$. Then $G := k[x_1, \ldots, x_s]/(\text{In}(f_1), \ldots, \text{In}(f_c))$ and it follows that
\begin{align*}
    P_{G}^{n} = \frac{k[x_1, \ldots, x_s, \xi_1, \ldots, \xi_s]}{(\text{In}(f_1), \ldots, \text{In}(f_c), \text{In}(g_1), \ldots, \text{In}(g_c)) + (x_1 - \xi_1, \ldots, x_s - \xi_s)^{n+1}}.
\end{align*}
Let $J_1 := (\text{In}(f_1), \ldots, \text{In}(f_c), \text{In}(g_1), \ldots, \text{In}(g_c)) + (x_1 - \xi_1, \ldots, x_s - \xi_s)^{n+1}$ and $J_2 := \text{In}((f_1, \ldots, f_c, g_1, \ldots, g_c) + (x_1 - \xi_1, \ldots, x_s - \xi_s)^{n+1})$. Clearly, $J_1 \subseteq J_2$. We then have the following surjective $G$-module homomorphism
\begin{align*}
    \psi: P_{G}^{n} \longrightarrow \text{gr}(P_{R}^{n})
\end{align*}
Let $m \in J_2 \mysetminus J_1$. Then we write 
\begin{align*}
    m = \text{In}\bigg(\sum_{i}a_if_i + \sum_{j}b_jg_j + \sum_{|\alpha|=n+1 }r_{\alpha}(x_1 - \xi_1)^{\alpha_1}\cdots(x_s - \xi_s)^{\alpha_s}\bigg).
\end{align*}
for some $a_i, b_j, r_{\alpha} \in k[x_1, \ldots, x_s, \xi_1, \ldots, \xi_s]$, and assume that
\begin{align*}
    \sum_{i}\text{In}(a_i)\text{In}(f_i) + \sum_{j}\text{In}(b_j)\text{In}(g_j) + \sum_{|\alpha|=n+1 }\text{In}(r_{\alpha})(x_1 - \xi_1)^{\alpha_1}\cdots(x_s - \xi_s)^{\alpha_s} = 0.
\end{align*}
Otherwise $m \in J_1$. By the construction of the initial map~(\ref{In}), $m$ is a homogeneous polynomial and $\text{deg}(m) \geq n + 1$. Then the image of $m$ in $\text{gr}(P_{R}^{n}) \otimes_G G/m_G$ lies in $(\xi_1, \ldots, \xi_s)^{n+1}$. It is easy to see that
\begin{align*}
    P_{G}^{n} \otimes_G G/m_G = \frac{k[\xi_1, \ldots, \xi_s]}{(\text{In}(g_1), \ldots, \text{In}(g_c)) + (\xi_1, \ldots, \xi_s)^{n+1}},
\end{align*}
and 
\begin{align*}
    \operatorname{gr}(P_{R}^{n}) \otimes_G G/m_G = \frac{k[\xi_1, \ldots, \xi_s]}{\text{In}((g_1, \ldots, g_c) + (\xi_1, \ldots, \xi_s)^{n+1})}.
\end{align*}
Since $\{g_1, \ldots, g_c\}$ is a Gr{\"o}bner basis and with the similar argument as above, notice that $(\text{In}(g_1), \ldots, \text{In}(g_c)) + (\xi_1, \ldots, \xi_s)^{n+1}) = \text{In}(g_1, \ldots, g_c + (\xi_1, \ldots, \xi_s)^{n+1})$. Therefore,
\begin{align}\label{eq4}
    P_{G}^{n} \otimes_G G/m_G \cong \operatorname{gr}(P_{R}^{n}) \otimes_G G/m_G
\end{align}

\section{\texorpdfstring{$\text{HN}_n$}{n} property for \texorpdfstring{$G$}{G}}
In this section, we prove the~\nameref{HNn} property for $G$. We know $G$ is a graded ring, but $G$ may not be an integral domain or even may not be reduced. So, we cannot apply Theorem~\ref{HNNgraded} to $G$. We now follow a different approach to prove~\nameref{HNn} property for $G$.
\begin{remark}\label{rem_tor}
    Let $G_{\text{red}}: = G/{\text{rad}(0)}$ be the reduction of the associated graded ring of $R$ modulo the radical ideal of $(0)$. Then we have the induced map of $G$-module homomorphism
    \begin{align*}
        P_{G}^{n} \longrightarrow P_{G_{\text{red}}}^{n}.
    \end{align*}
    It is also easy to see that if $g \in G$ is a non-zero divisor, then $[g] \in G_{\text{red}}$ is also a non-zero divisor. So, we also get a induced $G$-module homomorphism
    \begin{align*}
        dq: \widetilde{P}_{G}^{n} \longrightarrow \widetilde{P}_{G_{\text{red}}}^{n}.
    \end{align*}
\end{remark}

\begin{lemma}
$G_{\text{red}}$ satisfies the \nameref{HNn} property.
\end{lemma}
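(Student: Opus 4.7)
Although $G_{\text{red}}$ is a finitely generated graded reduced $k$-algebra of dimension $d$ (since $\dim G_{\text{red}} = \dim G = \dim R = d$), it need not be a domain, so Theorem~\ref{HNNgraded} does not apply directly. The plan is to recycle the proof of Theorem~\ref{HNNgraded} after building an arc for $G_{\text{red}}$ by factoring through a well-chosen irreducible component of $\operatorname{Spec}(G_{\text{red}})$. The key observation is that the only use of ``$\beta$ is an inclusion'' in Section~\ref{construction} is to guarantee $\beta(f) \neq 0$ whenever $f$ is a non-zero divisor of $R$, which is what forces $\tors_R(P_{R,+}^n) \subseteq \ker(d\beta)$; and in a reduced ring a non-zero divisor is precisely an element avoiding every minimal prime, so this weaker property survives projection onto a single component.

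Concretely, after a generic linear change of coordinates on $\mathbb{A}^s$ I would pick a graded Noether normalization $A := k[y_1, \ldots, y_d] \hookrightarrow G_{\text{red}}$ by homogeneous linear forms, together with a minimal prime $P \subset G_{\text{red}}$ satisfying $\dim(G_{\text{red}}/P) = d$ (which exists because $\dim G_{\text{red}} = d$). For generic coordinates the composition $A \to G_{\text{red}} \twoheadrightarrow G_{\text{red}}/P$ is simultaneously a Noether normalization of the graded domain $G_{\text{red}}/P$. Applying the graded arc lemma~\ref{gradedarc} to $G_{\text{red}}/P$ then yields a field $L$, algebraically independent $u_1, \ldots, u_d \in L$, and an inclusion $\beta_P: G_{\text{red}}/P \hookrightarrow L[[t]]$ with $\beta_P(\bar{x}_i) = u_i t$ for all $1 \leq i \leq s$. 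Composing with the quotient gives $\beta: G_{\text{red}} \to L[[t]]$; although $\beta$ is not injective, it still satisfies $\beta(f) \neq 0$ for every non-zero divisor $f$, because any such $f$ avoids $P$. With this $\beta$, the entire construction of Section~\ref{construction} runs through for $G_{\text{red}}$, producing the commutative diagram~\eqref{diagram} and its associated map $\overline{d\beta_A}$.

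The last step is to replay verbatim the coefficient-comparison argument of Theorem~\ref{HNNgraded}. Since $a=1$ in the graded setting, every generator of $N_L$ is divisible by $t$; on the other hand, by Remark~\ref{calculation}, any natural lift $\omega = \sum_{1 \leq |\alpha| \leq n} p_{\alpha}\, d^n(y_1)^{\alpha_1} \cdots d^n(y_d)^{\alpha_d}$ (with $p_{\alpha} \in k$) of a class in $\ker(\overline{d\beta_A})$ maps to a combination of the $u_1^{\alpha_1} \cdots u_d^{\alpha_d} (d^n t)^{|\alpha|}$ whose coefficients are scalars in $L$ and hence carry no factor of $t$. Forcing these coefficients to vanish gives $\sum_{|\alpha|=i} p_{\alpha} u_1^{\alpha_1} \cdots u_d^{\alpha_d} = 0$ for every $i$, and algebraic independence of $u_1, \ldots, u_d$ over $k$ then forces all $p_{\alpha} = 0$. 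Lemma~\ref{constlemma} finally delivers the \nameref{HNn} property for $G_{\text{red}}$.

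The main obstacle is exactly the failure of $G_{\text{red}}$ to be a domain, which prevents producing a single injective arc into a power series ring. The fix above replaces ``injective arc'' by the weaker ``arc that does not kill any non-zero divisor''—which is all the Section~\ref{construction} machinery actually uses—and once this is observed the remainder of the argument is a straight transcription of the proof of Theorem~\ref{HNNgraded}.
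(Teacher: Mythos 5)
Your proof is correct, and it rests on the same key idea as the paper's --- pass to a minimal prime $P$ of $G_{\text{red}}$ with $\dim(G_{\text{red}}/P)=d$ --- but the implementation is genuinely different. The paper never builds an arc on $G_{\text{red}}$ at all: writing $B=G_{\text{red}}$ and $B_{i_0}=B/p_{i_0}$ for a top-dimensional component, it forms the commutative square
\[
\begin{tikzcd}
P_{A}^n \otimes_A B/m_B \arrow{r}{\overline{d\varphi}} \arrow[swap]{d} & \widetilde{P}_{B}^{n} \otimes_B B/m_B \arrow{d} \\
P_{A}^n \otimes_A B_{i_0}/m_{B_{i_0}} \arrow{r}{\overline{d\varphi}_0}& \widetilde{P}_{B_{i_0}}^{n} \otimes_{B_{i_0}} B_{i_0}/m_{B_{i_0}},
\end{tikzcd}
\]
notes that the left vertical map is an isomorphism and that $\overline{d\varphi}_0$ is injective because Theorem~\ref{HNNgraded} (via Remark~\ref{graded}) already applies to the graded \emph{domain} $B_{i_0}$, and concludes injectivity of $\overline{d\varphi}$ by a diagram chase. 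You instead push the arc machinery itself onto the non-domain $G_{\text{red}}$: you compose the graded arc of $G_{\text{red}}/P$ with the quotient map, observe that the only role of injectivity of $\beta$ in Section~\ref{construction} is to ensure $\beta(f)\neq 0$ for non-zero divisors $f$ (so that $\tors(P_{G_{\text{red}},+}^n)\subseteq\ker(d\beta)$), and then rerun the coefficient-comparison argument of Theorem~\ref{HNNgraded} verbatim. Your route buys a slightly more flexible version of the arc construction (the hypothesis ``$\beta$ is an inclusion'' can be weakened to ``$\beta$ kills no non-zero divisor''), at the cost of re-deriving what the paper reuses as a black box; the paper's route is shorter and cleanly isolates the reduction to the domain case. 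The one point worth making explicit in your write-up is the justification that in the reduced Noetherian ring $G_{\text{red}}$ the zero divisors are exactly the union of the minimal primes, which is what guarantees a non-zero divisor avoids $P$; you state this correctly but it is the load-bearing fact for your weakened torsion lemma.
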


\begin{proof}
    Let $B := G_{\text{red}}$ and $m_B := ({m_G})_{\text{red}} = (m_G + \text{rad}(0))/\text{rad}(0) = m_G/\text{rad}(0)$. We can write $B = k[x_1, \ldots, x_s]/J$ where $J$ is a homogeneous ideal with $J = \text{rad}(J)$. It follows that $J = p_1 \cap \cdots \cap p_l$ where $p_1, \ldots, p_l$ are homogeneous prime ideals containing $J$. Let $B_i := B/p_i$ and $Q_i = \text{Frac}(B_i)$ be the fraction field of $B_i$. Then the total ring of fractions of $B$, denoted $Q$ is given by $Q: = Q_1 \times \cdots \times Q_l$. We have a map $B \longrightarrow Q$ which induces the map $g: P_{B}^n \longrightarrow P_{Q}^n = \bigoplus_{i=1}^{l}P_{Q_i}^n$, and $\text{im}(g) = \widetilde{P}_{B}^{n}$ %= P_{B/k}^n/\text{tors}(P_{B/k}^n)$ where $\text{tors}(P_{B/k}^n) := \{m \in P_{B/k}^n \ \mid \exists \ \text{a non-zero divisor} \ r \in B \ \text{such that} \ rm = 0 \}$.
    We know that there exists $i_0 \in \{1, \ldots, l\}$ such that $\dim B_{i_0} = \dim B := d$. Since $B_{i_0} = B/p_{i_0}$, we have a $B$-module homomorphism $P_{B}^{n} \longrightarrow P_{B_{i_o}}^{n}$. We also have a map $B_{i_0} \longrightarrow Q_{i_0}$ which induces the map $g_{i_0}: P_{B_{i_0}}^n \longrightarrow P_{Q_{i_0}}^n$, and $\text{im}(g_{i_0}) = \widetilde{P}_{B_{i_0}}^{n}$. Notice that we have a commutative diagram
    \[ 
\begin{tikzcd}
P_{B}^{n} \arrow{r}{g} \arrow[swap]{d} & P_{Q}^{n} \arrow{d} \\%
P_{B_{i_o}}^{n} \arrow{r}{g_{i_0}}& P_{Q_{i_o}}^{n}.
\end{tikzcd}
\]
It follows that we have a $B$-module homomorphism ${dq}_0: \widetilde{P}_{B}^{n} \longrightarrow \widetilde{P}_{B_{i_0}}^{n}$. By the construction in the Section~\ref{construction}, we have the following commutative diagram:
\[ 
\begin{tikzcd}
P_{A}^n \otimes_A B/m_B \arrow{r}{\overline{d\varphi}} \arrow[swap]{d} & \widetilde{P}_{B}^{n} \otimes_B B/m_B \arrow{d} \\%
P_{A}^n \otimes_A B_{i_0}/m_{B_{i_0}} \arrow{r}{\overline{d\varphi}_0}& \widetilde{P}_{B_{i_0}}^{n} \otimes_{B_{i_0}} B_{i_0}/m_{B_{i_0}},
\end{tikzcd}
\]
    where $A = k[x_1, \ldots, x_d]$ is the generic linear projection as defined in the arc lemma~\ref{arc}. The left vertical map in the above diagram is an isomorphism, and by Remark~\ref{graded}, $\overline{d\varphi}_0$ is injective. It follows that $\overline{d\varphi}$ is injective.
\end{proof}

\begin{prop}\label{graded*}
$G$ satisfies the \nameref{HNn} property.
\end{prop}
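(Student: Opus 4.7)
The plan is to deduce the $\operatorname{HN}_n$ property for $G$ by transferring it from $G_{\mathrm{red}}$ along the canonical surjection $G \twoheadrightarrow G_{\mathrm{red}}$. Concretely, I want to build a commutative square whose horizontal arrows are the candidate HN maps for $G$ and $G_{\mathrm{red}}$, whose left vertical arrow is an isomorphism, and whose bottom arrow is injective by the preceding lemma. Once this square is in place, the injectivity of the top arrow is automatic.

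First, I would fix the generic Noether normalization $A = k[x_1, \ldots, x_d]$ chosen in the preceding lemma for $G_{\mathrm{red}}$. Since $G$ and $G_{\mathrm{red}}$ have the same Krull dimension $d$ and are presented inside the same $k[x_1, \ldots, x_s]$, the same map $A \longrightarrow G$ is also a Noether normalization (possibly after shrinking the dense open subset $\mathcal{U} \cap \mathcal{V} \subseteq \mathcal{H}$ from Remark~\ref{projection} to the intersection of the corresponding open sets for both rings). Applying functoriality of the module of principal parts to the composition $A \longrightarrow G \longrightarrow G_{\mathrm{red}}$, and combining it with the torsion-compatible map $dq$ of Remark~\ref{rem_tor}, produces the commutative diagram
\[
\begin{tikzcd}
P_{A,+}^n \otimes_A G/m_G \arrow[r] \arrow[d, "\cong"'] & \widetilde{P}_{G,+}^n \otimes_G G/m_G \arrow[d] \\
P_{A,+}^n \otimes_A G_{\mathrm{red}}/m_{G_{\mathrm{red}}} \arrow[r] & \widetilde{P}_{G_{\mathrm{red}},+}^n \otimes_{G_{\mathrm{red}}} G_{\mathrm{red}}/m_{G_{\mathrm{red}}}
\end{tikzcd}
\]
in which the left vertical map is the canonical identification coming from the equality of residue fields $G/m_G = G_{\mathrm{red}}/m_{G_{\mathrm{red}}} = k$.

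By the preceding lemma the bottom horizontal arrow is injective, and the left vertical is an isomorphism, so the composition left-then-bottom is injective. Since the square commutes, this equals the composition top-then-right, which therefore is injective; hence the top arrow is injective, which is exactly the $\operatorname{HN}_n$ property for $G$. I expect no serious obstacle in this argument; the only point requiring genuine care is the simultaneous Noether normalization, which is handled by intersecting the two dense open subsets of $\mathcal{H}$ from Remark~\ref{projection}. In particular, we never need the right vertical arrow to be injective on its own — only the injectivity of the outer composition is used.
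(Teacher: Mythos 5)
Your proposal is correct and follows essentially the same route as the paper: both reduce the $\mathrm{HN}_n$ property for $G$ to that of $G_{\mathrm{red}}$ via the commutative square built from the surjection $G \twoheadrightarrow G_{\mathrm{red}}$ and the torsion-compatible map of Remark~\ref{rem_tor}, using that the left vertical arrow is an isomorphism of residue fields and the bottom arrow is injective by the preceding lemma. Your explicit attention to choosing the Noether normalization $A$ compatibly for both rings is a point the paper treats implicitly, but it does not change the argument.
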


\begin{proof}
    The Remark~\ref{rem_tor} and the construction in the Section~\ref{construction} gives us the following commutative diagram
    \[ 
\begin{tikzcd}
{P_{A}^n} \otimes_A G/m_G \arrow{r}{\overline{d\phi}} \arrow[swap]{d} & {\widetilde{P}_{G}^{n}} \otimes_G G/m_G \arrow{d} \\%
{P_{A}^n} \otimes_A G_{\text{red}}/({m_G})_{\text{red}} \arrow{r}{\overline{d\varphi}}& {\widetilde{P}_{G_{\text{red}}}^{n}} \otimes_{G_{\text{red}}} G_{\text{red}}/({m_G})_{\text{red}}.
\end{tikzcd}
\]
Here, the left vertical map is an isomorphism, and by the above lemma, we know $\overline{d\varphi}$ is injective. It follows that $\overline{d\phi}$ is injective.
\end{proof}

\begin{remark}
    We know a classical result for the module of Kähler differentials saying that if $\Omega_R$ is free, then $R$ is regular. We give a analogous result for the module of principal parts. This was previously obtained by Benner-Jeffries-Nuñez \cite[see Theorem 10.2]{brenner2019quantifying}.
 
\end{remark}

\begin{cor}
    Let $R$ be as in Setup~\ref{setup} of Krull dimension $d$. Suppose $P_{R}^{n}$ is free $R$-module of rank $D := \binom{n+d}{d}$, then $R$ is regular.
\end{cor}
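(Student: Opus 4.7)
The plan is to compute the fiber $P_R^n \otimes_R R/m_R$ explicitly and then invoke a Hilbert function comparison on the associated graded ring $G := \operatorname{gr}_{m_R}(R)$. Under the strong freeness hypothesis the result can actually be obtained without invoking the $\operatorname{HN}_n$ machinery built up in this section.

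First I would identify $P_R^n \otimes_R R/m_R$ as a quotient of $R$. Writing $P_R^n = (R \otimes_k R)/I_R^{n+1}$ and tensoring over the right-hand $R$-module structure with $R/m_R = k$, the ring homomorphism
\[
R \otimes_k R \longrightarrow R, \qquad x \otimes y \longmapsto x\,\overline{y},
\]
(where $\overline{y}$ is the image of $y$ in the residue field $k$) realizes $(R \otimes_k R) \otimes_R R/m_R$ as $R$. A direct computation on the generators $1 \otimes r - r \otimes 1$ of $I_R$ shows that this homomorphism carries $I_R$ onto $m_R$, hence $I_R^{n+1}$ onto $m_R^{n+1}$, producing a natural isomorphism
\[
P_R^n \otimes_R R/m_R \;\cong\; R/m_R^{n+1}.
\]
Since $P_R^n$ is assumed free of rank $D$, this immediately forces $\dim_k R/m_R^{n+1} = D = \binom{n+d}{d}$, or equivalently $\sum_{i=0}^{n} \dim_k G_i = \binom{n+d}{d}$.

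The remainder is a Hilbert function comparison. I would establish the lower bound $\dim_k G_i \geq \binom{i+d-1}{d-1}$ for every $i \geq 0$. The ring $G$ has Krull dimension $d$, satisfies $G_0 = k$, and is generated in degree one by $G_1 = m_R/m_R^2$. Because $k$ is algebraically closed and hence infinite, a generic choice of $d$ elements in $G_1$ forms a homogeneous system of parameters $y_1, \ldots, y_d \in G_1$; these are necessarily algebraically independent (otherwise $G$ would be finite over a polynomial ring of dimension strictly less than $d$), so they give a graded inclusion $k[y_1, \ldots, y_d] \hookrightarrow G$. Comparing degree-$i$ pieces yields the claimed inequality.

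Summing the lower bounds and applying the hockey stick identity gives $\sum_{i=0}^{n} \dim_k G_i \geq \sum_{i=0}^{n} \binom{i+d-1}{d-1} = \binom{n+d}{d} = D$. Combined with the equality above, this forces $\dim_k G_i = \binom{i+d-1}{d-1}$ for every $0 \leq i \leq n$. In particular $\dim_k G_1 = d$; that is, the embedding dimension $\dim_k m_R/m_R^2$ equals the Krull dimension $d$, so $R$ is regular. The only nontrivial point in the whole argument is the existence of a degree-one homogeneous system of parameters in $G$, which is standard graded Noether normalization over an infinite field applied to a graded ring generated in degree one.
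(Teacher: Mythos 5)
Your proof is correct, and it takes a genuinely different and considerably more elementary route than the paper's. The paper deduces this corollary from the $\mathrm{HN}_n$ property of the associated graded ring $G$ (Proposition~\ref{graded*}), which in turn rests on the reduction to $G_{\mathrm{red}}$, the arc construction, and the Gr\"obner-basis comparison $\operatorname{gr}(P_{R}^{n})\otimes_G G/m_G \cong P_{G}^{n}\otimes_G G/m_G$ of the previous section; it then transports injectivity through the (non-linear) initial maps, counts dimensions, and finishes by passing to $\Omega_R$. You instead use only the standard identification $P_R^n\otimes_R R/m_R\cong R/m_R^{n+1}$ --- the fiber of the principal parts at a closed point is the coordinate ring of the $n$-th infinitesimal neighborhood, which is precisely why $\Nashn$ lives in a Hilbert scheme of points of length $\binom{n+d}{d}$ --- so freeness of rank $D$ pins down $\sum_{i=0}^{n}\dim_k G_i=\binom{n+d}{d}$. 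The degree-one graded Noether normalization of $G$ (available because $k$ is infinite, $G$ is standard graded, and $\dim G=\dim R=d$) gives the termwise lower bound $\dim_k G_i\ge\binom{i+d-1}{d-1}$, whose sum already equals $\binom{n+d}{d}$, forcing equality in every degree; equality in degree one is $\dim_k m_R/m_R^2=d$, i.e.\ regularity. Your argument is characteristic-free, avoids torsion, arcs, and Gr\"obner bases entirely, and in fact needs only the weaker hypothesis that $P_R^n$ is generated by $D$ elements (by Nakayama the fiber dimension equals the minimal number of generators, so $\le D$ against the lower bound $\ge D$ still forces equality). The one thing the paper's heavier route buys is that it exercises exactly the machinery needed for the main theorems, where one only knows local freeness of the torsion-free quotient $\widetilde{P}_{R,+}^{n}$ rather than of $P_{R}^{n}$ itself; your fiber computation does not survive passage to that quotient, so it cannot substitute for the $\mathrm{HN}_n$ approach in Theorem~\ref{NashntoNashO}, but as a proof of this corollary it is cleaner and complete.
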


\begin{proof}
    Consider the following commutative diagram
    \[ 
\begin{tikzcd}
{P_{A}^n} \otimes_A G/m_G \arrow{r}{dp} \arrow[swap]{d} & {P_{G}^{n}} \otimes_G G/m_G \arrow{d} \\%
{P_{A}^n} \otimes_A G/{m_G} \arrow{r}{\overline{dp}}& \widetilde{P}_{G}^n \otimes_G G/m_G.
\end{tikzcd}
\]
Then from Proposition~\ref{graded*}, $dp$ is injective. Recall from~(\ref{eq}) that
\begin{align*}
    P_{R}^{n} = \frac{k[x_1, \ldots, x_s, \xi_1, \ldots, \xi_s]_{(x_1, \ldots, x_s, \xi_1, \ldots, \xi_s)}}{(f_1, \ldots, f_c, g_1, \ldots, g_c) + (x_1 - \xi_1, \ldots, x_s - \xi_s)^{n+1}}.
\end{align*}
And, we get
\begin{align*}
    P_{R}^{n} \otimes_R R/m_R = \frac{k[\xi_1, \ldots, \xi_s]_(\xi_1, \ldots, \xi_s)}{(g_1, \ldots, g_c) + (\xi_1, \ldots, \xi_s)^{n+1}}.
\end{align*}
It is then easy to see that the associated graded ring of $P_{R}^{n} \otimes_R R/m_R$ with respect to $(\xi_1, \ldots, \xi_s)$-adic filtration can be written as
\begin{align*}
    \operatorname{gr}(P_{R}^{n} \otimes_R R/m_R) = \frac{k[\xi_1, \ldots, \xi_s]}{\text{In}((g_1, \ldots, g_c) + (\xi_1, \ldots, \xi_s)^{n+1})}.
\end{align*}
Here, $\text{In}$ is the same as in~(\ref{In}).
It follows from~(\ref{eq4}) that
\begin{align*}
   \operatorname{gr}(P_{R}^{n} \otimes_R R/m_R) \cong \operatorname{gr}(P_{R}^{n}) \otimes_G G/m_G\cong P_{G}^{n} \otimes_G G/m_G
\end{align*}
We then get the following commutative diagram, and notice that the vertical maps are not linear
\[ 
\begin{tikzcd}
P_{A}^n \otimes_A R/m_R \arrow{r}{d{\psi}} \arrow[swap]{d}{\text{In}} & P_{R}^{n} \otimes_R R/m_R \arrow{d}{\text{In}} \\%
P_{\text{gr}(A)}^n \otimes_{\text{gr}(A)} G/m_G \arrow{r}{d\phi}& P_{G}^{n} \otimes_G G/m_G.
\end{tikzcd}
\]
Suppose $u \in P_{A}^n \otimes R/m_R$, then ${\text{In}}(u) \neq 0$. We know $d\phi$ is injective,and thus, $d{\psi}$ is injective. But $\dim(P_{A,+}^n \otimes_A R/m_R) = \dim(P_{R,+}^{n} \otimes_R R/m_R) = D-1$, and so $d{\psi}$ is an isomorphism. This is equivalent to saying that
\begin{align*}
    P_{A,+}^n \otimes_A R/m_R \longrightarrow P_{R,+}^{n} \otimes_R R/m_R
\end{align*}
is an isomorphism. Further, it follows from Nakayama's lemma that the $R$-module $P_{R,+}^{n}$ is freely generated by $\big\{{{d^n}({x_1})}^{\alpha_1} \cdots {{d^n}({x_d})}^{\alpha_d} \mid 1 \leq |\alpha| \leq n\big\}$. Furthermore, we have a surjective $R$-module homomorphism
\begin{align*}
    P_{R,+}^{n} = I_R/I_R^{n+1} \longrightarrow \Omega_{R} = I_R/I_R^2.
\end{align*}
It follows that the $R$-module $\Omega_{R}$ is generated by $\big\{d({x_1}), \cdots, d({x_d})\big\}$. But $\operatorname{rank}(\Omega_{R}) = d$. Thus, $\Omega_{R}$ is a free $R$-module, and so $R$ is regular.
\end{proof}

%We are now ready to prove the important result of this section.
%\begin{thm}\label{HNN_R}
%Let $X = V(f)$ be a hypersurface in the Setup~\ref{setup}. Then $R$ satisfies the \nameref{HNN} property.
%\end{thm}

%\begin{proof}
   % We recall that there is an intial map $\text{In}: P_{R/k}^n \longrightarrow \text{gr}(P_{R/k}^n)$ with respect to the filtration of $P_{R/k}^n$. Using Proposition~\ref{torsionhypersurface} and by the above Lemma~\ref{graded*}, we get the following commutative diagram
   % \[ 
%\begin{tikzcd}
%P_{A}^n \otimes_A R/m_R \arrow{r}{\overline{d{\varphi}}} \arrow[swap]{d}{\text{In}} & \widetilde{P}_{R}^{n} \otimes_R R/m_R \arrow{d} \\%
%P_{\text{gr}(A)/k}^n \otimes_{\text{gr}(A)} G/m_G \arrow{r}{\overline{dp}}& \widetilde{P}_{G}^{n} \otimes_G G/m_G.
%\end{tikzcd}
%\]
%Here, $\text{gr}(A) \cong A$. Suppose $u \in P_{A}^n \otimes R/m_R$, then ${\text{In}}(u) \neq 0$. And, by the Proposition~\ref{torsion} $\overline{dp}$ is injective. Thus, $\overline{d{\varphi}}$ is injective.
%\end{proof}

\section{Higher Nobile's theorem}

We are now ready to prove the higher Nobile's theorem for the graded case, and also for any algebraic variety when $n = 2$.
\begin{thm}\label{Higher_Nash_graded}(Graded case)
   Let $X$ be a homogeneous algebraic variety, and $R$ be as in Setup~\ref{setup}. Suppose $\Nashn(X) \cong X$ for some $n \in \mathbb{N}$, then
   \begin{enumerate}[label=(\arabic*)]
        \item if $\operatorname{char}(k) = 0$, then $X$ is a non-singular variety.
        \item if $X$ is normal and $\operatorname{char}(k) > 0$, then $X$ is a non-singular variety.
    \end{enumerate}
   \end{thm}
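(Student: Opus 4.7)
The plan is to combine the two main technical results established earlier in the paper with the classical Nobile theorem (and its characteristic $p$ analogue due to Duarte and N\'u\~nez-Betancourt). The structure is essentially a direct reduction: higher Nash blow-up triviality $\Rightarrow$ classical Nash blow-up triviality $\Rightarrow$ non-singularity.

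First, I would reduce to the affine local case. Since being non-singular is a local property and $\Nashn(X) \cong X$ implies the same isomorphism on any open affine chart, we may assume $X$ is an affine graded variety as in Setup~\ref{setup}, and work at an arbitrary closed point $x \in X$ with local ring $R = \mathcal{O}_{X,x}$. The key input is that Theorem~\ref{HNNgraded} has already shown that this local ring $R$ (arising from a graded variety, over a field of arbitrary characteristic) satisfies the $\operatorname{HN}_n$ property.

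Next, I would invoke Theorem~\ref{NashntoNashO}: since $R$ satisfies $\operatorname{HN}_n$ and $\Nashn(X) \cong X$ by hypothesis, we conclude $\NashO(X) \cong X$. At this point the higher-order content of the hypothesis has been completely absorbed, and we are reduced to the statement that the classical Nash blow-up of $X$ is an isomorphism.

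Finally, I would apply the appropriate version of the classical Nobile theorem. In case (1), where $\operatorname{char}(k) = 0$, this is Nobile's original theorem \cite{MR0409462} (extended from $\mathbb{C}$ to any algebraically closed field of characteristic zero in a standard way), which gives that $X$ is non-singular. In case (2), where $X$ is normal and $\operatorname{char}(k) > 0$, this is precisely the theorem of Duarte and N\'u\~nez-Betancourt \cite{MR4382471}, which again yields that $X$ is non-singular. This completes the proof.

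The main conceptual step---and where essentially all the work of the paper is concentrated---is the $\operatorname{HN}_n$ property in the graded case (Theorem~\ref{HNNgraded}), together with its use in Theorem~\ref{NashntoNashO} to reduce higher-order Nash information to first-order Nash information; given those, the present theorem is a direct corollary, and no further obstacle arises beyond citing the correct form of classical Nobile.
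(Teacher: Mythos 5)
Your proposal matches the paper's proof: the paper likewise deduces the result by combining Theorem~\ref{NashntoNashO} (whose hypothesis is supplied by the graded $\operatorname{HN}_n$ property of Theorem~\ref{HNNgraded}) with classical Nobile \cite{MR0409462} in characteristic zero and the Duarte--N\'u\~nez-Betancourt theorem \cite{MR4382471} in the normal positive-characteristic case. No gaps; the reduction to the affine local setting and the citation structure are exactly as in the paper.
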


\begin{proof}
     By the Theorem~\ref{NashntoNashO}, \cite[Theorem 2]{MR0409462} and \cite[Theorem 3.10] {MR4382471}, it concludes the proof of (1) and (2).
\end{proof}

\begin{thm}($n = 2$)\label{Higher_Nash_n=2}
    Let $X$ be an algebraic variety over an algebraically closed field $k$ with $\operatorname{char}(k) = 0$. Suppose $\NashT(X) \cong X$, then $X$ is a non-singular variety.
\end{thm}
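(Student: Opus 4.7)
The plan is to combine the pieces already established: the reduction step \cref{NashntoNashO}, the $\operatorname{HN}_2$ property proved in \cref{HNN_n=2}, and the classical Nobile theorem. Unlike the graded version (\cref{Higher_Nash_graded}), we are not assuming $X$ is graded, so we must use the characteristic zero arc lemma (\cref{arc}) rather than the graded arc lemma.

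First, I would work locally. Given any closed point $x \in X$, set $R := \mathcal{O}_{X,x}$ as in \cref{setup}. Since $\operatorname{char}(k) = 0$, \cref{HNN_n=2} tells us that $R$ satisfies the $\operatorname{HN}_2$ property. This holds for every closed point, so the hypothesis of \cref{NashntoNashO} is satisfied at each localization.

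Second, I would invoke \cref{NashntoNashO} with $n=2$: since $\NashT(X) \cong X$ and $R$ satisfies $\operatorname{HN}_2$ at every closed point, we conclude $\NashO(X) \cong X$. That is, the classical (first order) Nash blow-up of $X$ agrees with $X$.

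Finally, since $\operatorname{char}(k) = 0$ and $k$ is algebraically closed, the classical Nobile theorem \cite[Theorem 2]{MR0409462} applies: if $\NashO(X) \cong X$ then $X$ is non-singular. This completes the proof. There is no real obstacle here, since the substantive work has already been done; the main point to be careful about is that \cref{NashntoNashO} is stated pointwise on the localizations, so one must verify the $\operatorname{HN}_2$ condition at \emph{every} closed point of $X$, which is exactly what \cref{HNN_n=2} provides without any gradedness assumption.
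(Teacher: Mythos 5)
Your proposal is correct and follows essentially the same route as the paper: verify the $\operatorname{HN}_2$ property via \cref{HNN_n=2}, feed it into the reduction \cref{NashntoNashO} to get $\NashO(X) \cong X$, and conclude by the classical Nobile theorem. If anything, you are slightly more explicit than the paper's one-line proof in noting that the $\operatorname{HN}_2$ hypothesis of \cref{NashntoNashO} must be checked at every closed point, which is exactly what \cref{HNN_n=2} supplies.
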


\begin{proof}
    By the Theorem~\ref{NashntoNashO} and \cite[Theorem 2]{MR0409462}, $X$ is a non-singular variety.
\end{proof}

\subsection*{Acknowledgements}
I would like to sincerely thank my advisor, Dr. Roi Docampo, for his constant support and valuable guidance throughout this research. His knowledge and advice were essential in helping me develop and complete this paper. I also want to thank Daniel Duarte for his feedback and valuable comments about the previous versions of this paper.

%\begin{cor}\label{Higher_Nobile}
    %Let $X$ be an algebraic variety of dimension $d$. Suppose $\text{Nash}_{n}(X) \cong X$ for some $n \in \mathbb{N}$, then 
    %\begin{enumerate}[label=(\arabic*)]
        %\item if $\text{char}(k) = 0$, then $X$ is a non-singular variety.
        %\item if $X$ is normal and $\text{char}(k) > 0$, then $X$ is a non-singular variety.
    %\end{enumerate}                  
%\end{cor}

%\begin{proof}
   % By the Theorem 2 \cite{MR0409462} and Theorem 3.10 \cite{MR4382471}, it concludes the proof of (1) and (2).
%\end{proof}

%%%%%%%%%%%%%%%%%%%%%%%%%%%%%%%%%%%%%%%%%%%%%%%%%%%%%%%%%%%%%%%%%%%%%%%%%%%
%% References
%%%%%%%%%%%%%%%%%%%%%%%%%%%%%%%%%%%%%%%%%%%%%%%%%%%%%%%%%%%%%%%%%%%%%%%%%%%

% \nocite{*}

% \section{References}

\bibliographystyle{alpha-links}
\bibliography{references}

@article {MR0409462,
    AUTHOR = {Nobile, A.},
     TITLE = {Some properties of the {N}ash blowing-up},
   JOURNAL = {Pacific J. Math.},
  FJOURNAL = {Pacific Journal of Mathematics},
    VOLUME = {60},
      YEAR = {1975},
    NUMBER = {1},
     PAGES = {297--305},
      ISSN = {0030-8730,1945-5844},
   MRCLASS = {14B10 (32C45)},
  MRNUMBER = {409462},
MRREVIEWER = {Jean\ Giraud},
       URL = {http://projecteuclid.org/euclid.pjm/1102868640},
}

@article {MR2371378,
    AUTHOR = {Yasuda, T.},
     TITLE = {Higher {N}ash blowups},
   JOURNAL = {Compos. Math.},
  FJOURNAL = {Compositio Mathematica},
    VOLUME = {143},
      YEAR = {2007},
    NUMBER = {6},
     PAGES = {1493--1510},
      ISSN = {0010-437X,1570-5846},
   MRCLASS = {14E15 (14B12 14H20)},
  MRNUMBER = {2371378},
MRREVIEWER = {\c{S}erban\ B\u{a}rc\u{a}nescu},
       DOI = {10.1112/S0010437X0700276X},
       URL = {https://doi.org/10.1112/S0010437X0700276X},
}

@article {MR3987965,
    AUTHOR = {Barajas, P. and Duarte, D.},
     TITLE = {On the module of differentials of order {$n$} of
              hypersurfaces},
   JOURNAL = {J. Pure Appl. Algebra},
  FJOURNAL = {Journal of Pure and Applied Algebra},
    VOLUME = {224},
      YEAR = {2020},
    NUMBER = {2},
     PAGES = {536--550},
      ISSN = {0022-4049,1873-1376},
   MRCLASS = {13N05 (13N10 13N15)},
  MRNUMBER = {3987965},
MRREVIEWER = {Le\ Ngoc\ Long},
       DOI = {10.1016/j.jpaa.2019.05.020},
       URL = {https://doi.org/10.1016/j.jpaa.2019.05.020},
}

@article {MR3614150,
    AUTHOR = {Duarte, D.},
     TITLE = {Computational aspects of the higher {N}ash blowup of hypersurfaces},
   JOURNAL = {J. Algebra},
  FJOURNAL = {Journal of Algebra},
    VOLUME = {477},
      YEAR = {2017},
     PAGES = {211--230},
      ISSN = {0021-8693,1090-266X},
   MRCLASS = {14E15 (13A30)},
  MRNUMBER = {3614150},
MRREVIEWER = {Ana\ Bravo},
       DOI = {10.1016/j.jalgebra.2016.12.026},
       URL = {https://doi.org/10.1016/j.jalgebra.2016.12.026},
  eprint = {1411.2676},    
  archivePrefix = {arXiv},
  primaryClass = {math.AG}
}

@article {MR4382471,
    AUTHOR = {Duarte, D. and N{\'u}{\~n}ez-Betancourt, L.},
     TITLE = {Nash blowups in prime characteristic},
   JOURNAL = {Rev. Mat. Iberoam.},
  FJOURNAL = {Revista Matem\'atica Iberoamericana},
    VOLUME = {38},
      YEAR = {2022},
    NUMBER = {1},
     PAGES = {257--267},
      ISSN = {0213-2230,2235-0616},
   MRCLASS = {14E15 (14E18 14G17 16S32)},
  MRNUMBER = {4382471},
MRREVIEWER = {Ana\ Bravo},
       DOI = {10.4171/rmi/1278},
       URL = {https://doi.org/10.4171/rmi/1278},
}

@article {MR1218672,
    AUTHOR = {Oneto, A. and Zatini, E.},
     TITLE = {Remarks on {N}ash blowing-up},
     XNOTE = {Commutative algebra and algebraic geometry, II (Italian)
              (Turin, 1990)},
   JOURNAL = {Rend. Sem. Mat. Univ. Politec. Torino},
  FJOURNAL = {Universit\`a e Politecnico di Torino. Seminario Matematico.
              Rendiconti},
    VOLUME = {49},
      YEAR = {1991},
    NUMBER = {1},
     PAGES = {71--82},
      ISSN = {0373-1243},
   MRCLASS = {14E05 (13C40)},
  MRNUMBER = {1218672},
MRREVIEWER = {Luca\ Chiantini},
       URL = {https://seminariomatematico.polito.it/rendiconti/cartaceo/49-1/71.pdf}
}

@article {MR0238860,
    AUTHOR = {Grothendieck, A.},
     TITLE = {\'{E}l\'{e}ments de g\'{e}om\'{e}trie alg\'{e}brique. {IV}.
              \'{E}tude locale des sch\'{e}mas et des morphismes de
              sch\'{e}mas {IV}},
   JOURNAL = {Inst. Hautes \'{E}tudes Sci. Publ. Math.},
  FJOURNAL = {Institut des Hautes \'{E}tudes Scientifiques. Publications
              Math\'{e}matiques},
    VOLUME = {1967},
    NUMBER = {32},
      YEAR = {1967},
     PAGES = {361},
      ISSN = {0073-8301,1618-1913},
   MRCLASS = {14.55},
  MRNUMBER = {238860},
MRREVIEWER = {J.\ P.\ Murre},
       URL = {https://www.numdam.org/item/PMIHES_1967__32__5_0},
}

@article{brenner2019quantifying,
    AUTHOR = {Brenner, Holger and Jeffries, Jack and N{\'u}{\~n}ez-Betancourt, Luis},
     TITLE = {Quantifying singularities with differential operators},
   JOURNAL = {Adv. Math.},
  FJOURNAL = {Advances in Mathematics},
    VOLUME = {358},
      YEAR = {2019},
     PAGES = {106843, 89},
      ISSN = {0001-8708,1090-2082},
   MRCLASS = {14F10 (13A35 16S32)},
  MRNUMBER = {4020453},
MRREVIEWER = {Ana\ Bravo},
       DOI = {10.1016/j.aim.2019.106843},
       URL = {https://doi.org/10.1016/j.aim.2019.106843},
}

\end{document}